\documentclass{amsart}

\usepackage{amssymb}


\newcommand{\norm}[1]{\lVert#1\rVert}


\newtheorem{theorem}{Theorem}[section]
\newtheorem{lemma}[theorem]{Lemma}

\newtheorem{proposition}[theorem]{Proposition}

\theoremstyle{definition}
\newtheorem{definition}[theorem]{Definition}
\newtheorem{example}[theorem]{Example}

\theoremstyle{remark}

\numberwithin{equation}{section}

\begin{document}

\title{Constructing Banach ideals using upper $\ell_p$-estimates}

\author{Ben Wallis}
\address{Department of Mathematical Sciences, Northern Illinois University, DeKalb, IL 60115}
\email{wallis@math.niu.edu}

\begin{abstract}
Using upper $\ell_p$-estimates for normalized weakly null sequence images, we describe a new family of operator ideals $\mathcal{WD}_{\ell_p}^{(\infty,\xi)}$ with parameters $1\leq p\leq\infty$ and $1\leq\xi\leq\omega_1$.  These classes contain the completely continuous operators, and are distinct for all choices $1\leq p\leq\infty$ and, when $p\neq 1$, for all choices $\xi\neq\omega_1$.  For the case $\xi=1$, there exists an ideal norm $\norm{\cdot}_{(p,1)}$ on the class $\mathcal{WD}_{\ell_p}^{(\infty,1)}$ under which it forms a Banach ideal.
\end{abstract}

\maketitle

\section{Introduction}

The roots of the theory of operator ideals extend at least as far back as 1941 when J.W. Calkin observed that if $\mathcal{H}$ is a Hilbert space, then the subspaces of finite-rank operators, compact operators, and Hilbert-Schmidt operators all form multiplicative ideals in the space $\mathcal{L}(\mathcal{H})$ of continuous linear operators on $\mathcal{H}$ (\cite{Ca41}).  However, the concept of an ideal as a class of operators between arbitrary Banach spaces developed more recently, with the first thorough treatment of the subject, a monograph by Albrecht Pietsch, appearing in 1978 (\cite{Pi78}).  

In this paper we define and study a new family of operator ideals $\mathcal{WD}_{\ell_p}^{(\infty,\xi)}$ with parameters $1\leq p\leq\infty$ and $1\leq\xi\leq\omega_1$, where $\omega_1$ denotes the first uncountable ordinal.  For any fixed value of $\xi$, these ideals are distinct for all choices of $p$, which is to say that for any $1\leq p<q\leq\infty$ there exist Banach spaces $X$ and $Y$ for which the components satisfy $\mathcal{WD}_{\ell_p}^{(\xi,\infty)}(X,Y)\neq\mathcal{WD}_{\ell_q}^{(\xi,\infty)}(X,Y)$.  It remains an open question whether, for fixed $1<p\leq\infty$, the ideals are distinct for all choices of $\xi$.  However, we do obtain a partial positive answer by establishing $\mathcal{WD}_{\ell_p}^{(\infty,\xi)}\neq\mathcal{WD}_{\ell_p}^{(\infty,\omega_1)}$ whenever $\xi\neq\omega_1$ and $p\neq 1$.  We shall also see that $\mathcal{WD}_{\ell_p}^{(\infty,\xi)}$ always strictly includes the ideal of completely continuous operators $\mathcal{V}$, which shows that they are distinct from some other notable families of operator ideals with a parameter related to the $\ell_p$ spaces.  For instance, let $\mathcal{N}_p$, $\mathcal{I}_p$, and $\Pi_p$ denote the ideals of $p$-nuclear, $p$-integral, and absolutely $p$-summing operators, respectively.  Then $\mathcal{N}_p\subsetneq\mathcal{I}_p\subsetneq\Pi_p\subsetneq\mathcal{V}\subsetneq\mathcal{WD}_{\ell_p}^{(\infty,\xi)}$ (cf., e.g., Proposition 22 in \cite{Wo91} together with Theorem 2.17 in \cite{DJT95}).

Of special interest are the those operator ideals whose components are always norm-closed.  For instance, given arbitrary Banach spaces $X$ and $Y$, the compact, weakly compact, and completely continuous operators from $X$ into $Y$ are always norm-closed in $\mathcal{L}(X,Y)$, whereas the finite-rank operators are not.  We shall see that, when $p\neq 1$, there always exist separable spaces $X$ for which $\mathcal{WD}_{\ell_p}^{(\infty,\xi)}(X)$ fails to be norm-closed in $\mathcal{L}(X)$, and when $p\neq\infty$, we can choose $X$ to be reflexive.  Nevertheless, in the case $\xi=1$, we can construct an ideal norm $\norm{\cdot}_{(p,1)}$ for the class $\mathcal{WD}_{\ell_p}^{(\infty,1)}$ so that it forms a Banach ideal, that is, a ``nice'' norm assignment for each component space $\mathcal{WD}_{\ell_p}^{(\infty,1)}(X,Y)$ under which it becomes a Banach space.

The ideas for the construction of this family originate with \cite{BF11} and \cite{ADST09}.  In \cite{BF11}, the authors defined the subset $\mathcal{WS}(X,Y)$ of {\bf $\boldsymbol{(w_n)}$-singular} operators in $\mathcal{L}(X,Y)$ as those operators $T$ for which, given any normalized basic sequence $(x_n)$ in $X$, the image sequence $(Tx_n)$ fails to dominate $(w_n)$.  Here, $(w_n)$ is taken to be some normalized 1-spreading basis for some fixed Banach space $W$.  They showed that when $(w_n)$ is the summing basis for $c_0$, the unit vector basis for $\ell_1$, or the unit vector basis for $c_0$, the resulting classes $\mathcal{WS}$ are the norm-closed ideals of weakly compact, Rosenthal, or compact operators, respectively.  Meanwhile, in \cite{ADST09} the authors constructed and studied classes of operators based on Schreier family support.  In particular, they defined $\mathcal{SS}_\xi$, the {\bf $\boldsymbol{\mathcal{S}_\xi}$-strictly singular} operators, as the class of all continuous linear Banach space operators $T$ for which if $(x_n)$ is any normalized basic sequence in the domain space, for any $\epsilon>0$ there exists some $z\in[x_n]$ with support lying in the $\xi$th Schreier family $\mathcal{S}_\xi$, and satisfying $\norm{Tz}<\epsilon\norm{z}$.

In this paper, we use similar ideas to produce operator ideals with certain nice properties.  However, whereas classes $\mathcal{WS}$ and $\mathcal{SS}_\xi$ were constructed using normalized basic sequences and singular estimates on their images, for the classes $\mathcal{WD}_{\ell_p}^{(\infty,\xi)}$ we instead use normalized weakly null sequences and uniform upper estimates.  Since continuous linear operators preserve weak convergence, the choice of weakly null sequences in place of basic sequences allows us to show that the classes $\mathcal{WD}_{\ell_p}^{(\infty,\xi)}$ are indeed multiplicative ideals between arbitrary Banach spaces.  The choice of uniform upper estimates instead of singular estimates then gives us a natural way to show that each class $\mathcal{WD}_{\ell_p}^{(\infty,\xi)}$ is closed under addition.

Now we shall take a moment to recall some essential definitions and basic facts relevant to our project.  Let $\mathcal{J}$ be a subclass of the class $\mathcal{L}$ of all continuous linear operators between Banach spaces, such that for each pair of Banach spaces $X$ and $Y$, $\mathcal{J}(X,Y):=\mathcal{L}(X,Y)\cap\mathcal{J}$ is a linear subspace containing all the finite-rank operators from $X$ into $Y$.  We call $\mathcal{J}$ an \textbf{operator ideal} if whevenever $W,X,Y,Z$ are Banach spaces and $T\in\mathcal{J}(X,Y)$, then for all operators $A\in\mathcal{L}(W,X)$ and $B\in\mathcal{L}(Y,Z)$ we have $BTA\in\mathcal{J}(W,Z)$.  An {\bf ideal norm} with respect to an operator ideal $\mathcal{J}$ is a rule $\rho$ that assigns to every $T\in\mathcal{J}(X,Y)$, a nonnegative real value $\rho(T)$, and satisfying the following conditions for all Banach spaces $W$, $X$, $Y$, and $Z$.  First, $\rho(x^*\otimes y)=\norm{x^*}\norm{y}$ for all $x^*\in X^*$ and $y\in Y$, where $x^*\otimes y$ is viewed as the 1-dimensional operator $x\mapsto x^*(x)y$ lying in $\mathcal{J}(X,Y)$; second, $\rho(S+T)\leq\rho(S)+\rho(T)$ for all $S,T\in\mathcal{J}(X,Y)$; and third, $\rho(BTA)\leq\norm{B}\rho(T)\norm{A}$ for all $T\in\mathcal{J}(X,Y)$, $A\in\mathcal{L}(W,X)$, and $B\in\mathcal{L}(Y,Z)$.  A {\bf Banach ideal} is then an operator ideal $\mathcal{J}$ equipped with an an ideal norm $\rho$ such that all components $\mathcal{J}(X,Y)$ are complete with respect to the norm on that space induced by $\rho$.

We will also need to use the \textbf{Schreier families}.  These are denoted $\mathcal{S}_\xi$ for each countable ordinal $0\leq\xi<\omega_1$, and we must define them as follows.  Put $\mathcal{S}_0:=\{\{n\}:n\in\mathbb{N}\}\cup\{\emptyset\}$ and $\mathcal{S}_1:=\{F\subset\mathbb{N}:\# F\leq\min F\}\cup\{\emptyset\}$.  Now fix a countable ordinal $1\leq\xi<\omega_1$.  In case $\xi=\zeta+1$ for some countable ordinal $1\leq\zeta<\omega_1$ we define $\mathcal{S}_\xi$ as the set containing $\emptyset$ together with all $F\subset\mathbb{N}$ such that there exist $n\in\mathbb{N}$ and $F_1<\cdots<F_n\in\mathcal{S}_\zeta$ satisfying $\{\min F_k\}_{k=1}^n\in\mathcal{S}_1$ and $F=\bigcup_{k=1}^nF_k$. In case $\xi$ is a limit ordinal we fix a strictly increasing sequence $(\zeta_n)$ of non-limit-ordinals satisfying $\sup_n\zeta_n=\xi$, and define $\mathcal{S}_\xi:=\bigcup_{n=1}^\infty\{F\in\mathcal{S}_{\zeta_n}:n\leq F\}$.

Usually in the literature, the family of finite subsets of natural numbers is denoted $[\mathbb{N}]^{<\infty}$, or $\mathcal{P}_{<\infty}(\mathbb{N})$.  However, for convenience, let us abuse our notation and write this family as if it were the ``$\omega_1$th Schreier family.''  In other words, we set $\mathcal{S}_{\omega_1}:=\{F\subset\mathbb{N}:\#F<\infty\}$.  This will greatly simplify the writing.

The sets $\mathcal{S}_\xi$ ($1\leq\xi\leq\omega_1$) have some very nice properties, most especially that each is {\bf spreading}.  This means that if $\{m_1<\cdots<m_k\}\in\mathcal{S}_\xi$ and $\{n_1<\cdots<n_k\}$ satisfies $m_i\leq n_i$ for all $1\leq i\leq k$, then $\{n_1<\cdots<n_k\}\in\mathcal{S}_\xi$ also holds.  They are also {\bf hereditary}, which means that if $E\in\mathcal{S}_\xi$ and $F\subseteq E$ then $F\in\mathcal{S}_\xi$.  Contrary to what we might expect, though, the Schreier families are {\it not} increasing under the inclusion relation.  However, it is easily seen that, for all $1\leq\xi\leq\omega_1$, we have $\mathcal{S}_1\subseteq\mathcal{S}_\xi$, and in particular we have $\{k\}\in\mathcal{S}_\xi$ for all $k\in\mathbb{N}$.  Moreover, the Schreier families do behave somewhat nicely under the inclusion relation in the sense that, if $1\leq\zeta<\xi\leq\omega_1$ are ordinals, then there is $d=d(\zeta,\xi)\in\mathbb{N}$ such that for all $F\in\mathcal{S}_\zeta$ satisfying $d\leq\min F$ we have $F\in\mathcal{S}_\xi$.





We will appeal several times to the Bessaga-Pe\l czy\'{n}ski Selection Principle.  However, the version that we need is slightly stronger than typically stated in the literature.  More specifically, we need a small uniform bound on the equivalence constant.  The proof is practically identical to the standard small perturbations and gliding hump arguments found, for instance, in Theorem 1.3.9 and Proposition 1.3.10 from \cite{AK06}.

\begin{theorem}[Uniform Bessaga-Pe\l czy\'{n}ski Selection Principle]  Suppose $X$ is a Banach space with a basis $(e_i)$, and corresponding coefficient functionals $(e_i^*)\subset X^*$.  Let $(x_n)\subset X$ be a sequence satisfying $\lim_{n\to\infty}\norm{x_n}=1$ and $\lim_{n\to\infty}e_i^*(x_n)=0$ for all $i\in\mathbb{N}$.  Then for any $\epsilon>0$, there exists a subsequence $(x_{n_k})$ which is $(1+\epsilon)$-congruent to a normalized block basis of $(e_i)$.  
\end{theorem}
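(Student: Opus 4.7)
The plan is to combine a standard gliding-hump construction with a small-perturbation argument, choosing the error tolerances in advance so that the final equivalence constant is at most $1+\epsilon$. Let $K$ denote the basis constant of $(e_i)$. I would first fix a summable sequence of positive reals $(\delta_k)$ so small that $4KC\sum_k\delta_k<\epsilon/(1+\epsilon)$, where $C$ is an absolute constant that will emerge from the normalization step; this threshold is pre-selected so as to make the final perturbation estimate yield $(1+\epsilon)$-equivalence.

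Next I would build the subsequence $(x_{n_k})$ and integers $0=N_0<N_1<\cdots$ by induction. Given $n_1<\cdots<n_{k-1}$ and $N_1<\cdots<N_{k-1}$, use $\norm{x_n}\to 1$ together with $e_i^*(x_n)\to 0$ for each $i\leq N_{k-1}$ to choose $n_k>n_{k-1}$ satisfying $|\norm{x_{n_k}}-1|<\delta_k$ and $\norm{\sum_{i=1}^{N_{k-1}}e_i^*(x_{n_k})e_i}<\delta_k$. Since the partial sums of $(e_i)$ converge pointwise to the identity, pick $N_k>N_{k-1}$ large enough that $\norm{x_{n_k}-\sum_{i=1}^{N_k}e_i^*(x_{n_k})e_i}<\delta_k$, and set $y_k:=\sum_{i=N_{k-1}+1}^{N_k}e_i^*(x_{n_k})e_i$. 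Then $(y_k)$ is a block basic sequence of $(e_i)$ with basis constant at most $K$ and $\norm{x_{n_k}-y_k}<2\delta_k$, so $\norm{y_k}\to 1$. Setting $\tilde y_k:=y_k/\norm{y_k}$ normalizes the block basis and introduces only an additional $O(\delta_k)$ error, yielding $\norm{x_{n_k}-\tilde y_k}\leq C\delta_k$ for the absolute constant $C$ chosen above.

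Finally, I would invoke the small-perturbation estimate. Each coefficient functional of the normalized block basic sequence $(\tilde y_k)$ has norm at most $2K$, so for any finitely supported scalars $(a_k)$ one has $|a_k|\leq 2K\bignorm{\sum_j a_j\tilde y_j}$. Consequently,
\[
\bignorm{\sum_k a_k(x_{n_k}-\tilde y_k)} \leq 2K\Bigl(\sum_k C\delta_k\Bigr)\bignorm{\sum_j a_j\tilde y_j},
\]
and by the original choice of $(\delta_k)$ the right-hand coefficient is at most $\epsilon/(1+\epsilon)$. The triangle inequality then delivers both directions of the desired $(1+\epsilon)$-equivalence between $(x_{n_k})$ and the normalized block basis $(\tilde y_k)$.

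The main technical point, and the reason the uniform version is nontrivial, lies in the pre-selection of $(\delta_k)$. The classical Bessaga-Pe\l czy\'{n}ski argument delivers equivalence to a block basis only up to an unspecified constant, whereas here one must absorb the basis constant $K$, the coefficient-functional factor $2K$, and the normalization constant $C$ into the choice of $(\delta_k)$ \emph{before} the inductive extraction begins. Once this is arranged, the gliding-hump step and the perturbation estimate are routine.
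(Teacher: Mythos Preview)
Your proposal is correct and matches precisely what the paper indicates: the paper does not supply a proof but states that it is ``practically identical to the standard small perturbations and gliding hump arguments found, for instance, in Theorem 1.3.9 and Proposition 1.3.10 from \cite{AK06}.'' Your gliding-hump extraction followed by the principle of small perturbations, with the tolerances $(\delta_k)$ chosen in advance to force the final equivalence constant down to $1+\epsilon$, is exactly this argument carried out in detail.
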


We divide the remainder of this paper into sections 2 and 3.  In section 2 we define the classes $\mathcal{WD}_{\ell_p}^{(\infty,\xi)}$, and establish that, for the nontrivial case $p\neq 1$, they fail to be norm-closed, but as long as $\xi=1$ they form Banach ideals.  Then, in section 3 we discuss the significance of the parameters $p$ and $\xi$.

\section{The operator ideals $\mathcal{WD}_{\ell_p}^{(\infty,\xi)}$}





Let us state formally the definition of classes $\mathcal{WD}_{\ell_p}^{(\infty,\xi)}$.

\begin{definition}Let $X$ and $Y$ be Banach spaces, and fix some constants $0\leq C<\infty$ and $1\leq p\leq\infty$, and some countable ordinal $1\leq\xi<\omega_1$.  Put $\mathcal{A}_\xi:=\{(\alpha_k)\in c_{00}:\text{supp}(\alpha_k)\in\mathcal{S}_\xi\}$, the set of all scalar sequences with support in the $\xi$th Schreier family.  Then we denote by $\mathcal{WD}_{\ell_p}^{(C,\xi)}(X,Y)$ the set of all operators $T\in\mathcal{L}(X,Y)$ for which, given $\epsilon>0$, each normalized weakly null sequence $(x_n)\subset X$ admits a subsequence $(x_{n_k})$ such that for all $(\alpha_k)\in\mathcal{A}_\xi$, the estimate $\norm{\sum\alpha_kTx_{n_k}}\leq(C+\epsilon)\norm{(\alpha_k)}_{\ell_p}$ holds.  Then we set $\mathcal{WD}_{\ell_p}^{(\infty,\xi)}(X,Y):=\bigcup_{C\geq 0}\mathcal{WD}_{\ell_p}^{(C,\xi)}(X,Y)$.\end{definition}

Immediate from the definitions and the inequality $\norm{(\alpha_k)}_{\ell_p}\leq\norm{(\alpha_k)}_{\ell_q}$ for all $(\alpha_k)\in c_{00}$ and $1\leq q\leq p\leq\infty$, we get the following relations.

\begin{proposition}\label{inclusion}Let $X$ and $Y$ be Banach spaces, and fix some constants $0\leq C\leq D\leq\infty$ and $1\leq q\leq p\leq\infty$, and some ordinal $1\leq\xi\leq\omega_1$.  Then $\mathcal{WD}_{\ell_p}^{(C,\xi)}(X,Y)\subseteq\mathcal{WD}_{\ell_q}^{(C,\xi)}(X,Y)$ and $\mathcal{WD}_{\ell_p}^{(C,\xi)}(X,Y)\subseteq\mathcal{WD}_{\ell_p}^{(D,\xi)}(X,Y)$.\end{proposition}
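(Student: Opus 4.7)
The plan is to verify both inclusions directly from the definition, using the two monotonicity facts the statement advertises: the monotonicity of $\ell_p$-norms on $c_{00}$ in $p$, and the obvious monotonicity in the constant.

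For the first inclusion $\mathcal{WD}_{\ell_p}^{(C,\xi)}(X,Y)\subseteq\mathcal{WD}_{\ell_q}^{(C,\xi)}(X,Y)$, I would fix $T$ in the smaller class, take an arbitrary $\epsilon>0$ and an arbitrary normalized weakly null sequence $(x_n)\subset X$, and apply the definition to produce a subsequence $(x_{n_k})$ such that $\norm{\sum\alpha_kTx_{n_k}}\leq(C+\epsilon)\norm{(\alpha_k)}_{\ell_p}$ for every $(\alpha_k)\in\mathcal{A}_\xi$. Then, since $q\leq p$ implies $\norm{(\alpha_k)}_{\ell_p}\leq\norm{(\alpha_k)}_{\ell_q}$ for any finitely supported scalar sequence (the inequality quoted just before the proposition), the same subsequence witnesses membership of $T$ in $\mathcal{WD}_{\ell_q}^{(C,\xi)}(X,Y)$.

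For the second inclusion $\mathcal{WD}_{\ell_p}^{(C,\xi)}(X,Y)\subseteq\mathcal{WD}_{\ell_p}^{(D,\xi)}(X,Y)$, I would split into the cases $D<\infty$ and $D=\infty$. When $D<\infty$, I use the same chosen subsequence and observe that $(C+\epsilon)\norm{(\alpha_k)}_{\ell_p}\leq(D+\epsilon)\norm{(\alpha_k)}_{\ell_p}$ since $C\leq D$, so the estimate carries over directly. When $D=\infty$ there is nothing to do, since by definition $\mathcal{WD}_{\ell_p}^{(\infty,\xi)}(X,Y)=\bigcup_{C'\geq0}\mathcal{WD}_{\ell_p}^{(C',\xi)}(X,Y)$ already contains every $\mathcal{WD}_{\ell_p}^{(C,\xi)}(X,Y)$.

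There is no real obstacle here; both inclusions are a one-line bookkeeping exercise whose only substantive ingredient is the inequality $\norm{\cdot}_{\ell_p}\leq\norm{\cdot}_{\ell_q}$ on $c_{00}$ for $q\leq p$. The only thing to be mildly careful about is the edge case $D=\infty$ in the second inclusion, which I handle separately as above.
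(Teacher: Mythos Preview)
Your proposal is correct and matches the paper's approach exactly: the paper does not give a formal proof but simply declares the proposition ``immediate from the definitions and the inequality $\norm{(\alpha_k)}_{\ell_p}\leq\norm{(\alpha_k)}_{\ell_q}$ for all $(\alpha_k)\in c_{00}$ and $1\leq q\leq p\leq\infty$,'' and your argument is precisely the unpacking of that remark. The only very minor omission is the edge case $C=\infty$, which reduces at once to the finite-$C$ case by taking unions.
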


When checking that an operator satisfies the definition of $\mathcal{WD}_{\ell_p}^{(C,\xi)}$, the following Propositions will come in handy.

\begin{proposition}\label{seminormalized-tends-to-1}Let $X$ and $Y$ be Banach spaces, and fix constants $0\leq C<\infty$ and $1\leq p\leq\infty$, and some ordinal $1\leq\xi\leq\omega_1$.  Then $T\in\mathcal{WD}_{\ell_p}^{(C,\xi)}(X,Y)$ if and only if for all $\epsilon>0$ and every seminormalized weakly null sequence $(x_n)\subset X$ which admits a subsequence $(x_{n_k})$ satisfying $\norm{x_{n_k}}\to 1$, there exists a further subsequence $(x'_{n_k})$ such that for all $(\alpha_k)\in\mathcal{A}_\xi$, the estimate $\norm{\sum\alpha_kTx'_{n_k}}\leq(C+\epsilon)\norm{(\alpha_k)}_{\ell_p}$ holds.\end{proposition}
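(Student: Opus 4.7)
The plan is to prove the two implications separately; the backward direction is essentially immediate, while the forward direction requires a careful normalization argument with an $\epsilon/2$-style trick.

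For the \emph{backward} implication, suppose the seminormalized condition holds and let $(x_n)\subset X$ be any normalized weakly null sequence. Then $(x_n)$ is itself seminormalized, the ``subsequence with norms tending to $1$'' can be taken to be $(x_n)$ itself (every $\norm{x_n}=1$), so the hypothesis hands us directly a subsequence $(x'_n)$ with the required upper $\ell_p$-estimate. Hence $T\in\mathcal{WD}_{\ell_p}^{(C,\xi)}(X,Y)$.

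For the \emph{forward} implication, suppose $T\in\mathcal{WD}_{\ell_p}^{(C,\xi)}(X,Y)$ and let $(x_n)\subset X$ be seminormalized weakly null with a subsequence $(x_{n_k})$ satisfying $\norm{x_{n_k}}\to 1$. Fix $\epsilon>0$. First choose $\delta>0$ and $\epsilon'>0$ small enough that $(C+\epsilon')(1+\delta)\leq C+\epsilon$ (e.g.\ $\epsilon'=\epsilon/2$ and $\delta=\epsilon/(2C+\epsilon+2)$). Pass to a tail of $(x_{n_k})$ so that $\norm{x_{n_k}}\leq 1+\delta$ for every $k$; relabelling, still call this sequence $(x_{n_k})$. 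Set $y_k:=x_{n_k}/\norm{x_{n_k}}$. Since $\norm{x_{n_k}}\to 1$, the scalars $1/\norm{x_{n_k}}$ are bounded, so $(y_k)$ is weakly null, and clearly normalized. Applying the hypothesis that $T\in\mathcal{WD}_{\ell_p}^{(C,\xi)}(X,Y)$ to $(y_k)$ with tolerance $\epsilon'$, we obtain a subsequence $(y_{k_j})$ with
\[
\bignorm{\sum_j\beta_jTy_{k_j}}\leq (C+\epsilon')\norm{(\beta_j)}_{\ell_p}\quad\text{for every }(\beta_j)\in\mathcal{A}_\xi.
\]

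Now define $x'_{n_k}:=x_{n_{k_j}}$ along the same indices $k_j$, which is a further subsequence of $(x_{n_k})$. For any $(\alpha_j)\in\mathcal{A}_\xi$, the sequence $(\alpha_j\norm{x_{n_{k_j}}})$ has the same support as $(\alpha_j)$ and thus also lies in $\mathcal{A}_\xi$, while $\norm{(\alpha_j\norm{x_{n_{k_j}}})}_{\ell_p}\leq(1+\delta)\norm{(\alpha_j)}_{\ell_p}$. Therefore
\[
\bignorm{\sum_j\alpha_jTx_{n_{k_j}}}=\bignorm{\sum_j\alpha_j\norm{x_{n_{k_j}}}Ty_{k_j}}\leq(C+\epsilon')(1+\delta)\norm{(\alpha_j)}_{\ell_p}\leq(C+\epsilon)\norm{(\alpha_j)}_{\ell_p},
\]
which is the required estimate.

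The only place I expect to have to be slightly careful is choosing $\epsilon'$ and $\delta$ so that the product $(C+\epsilon')(1+\delta)$ lands below $C+\epsilon$, and making sure that rescaling by $\norm{x_{n_k}}$ does not disturb the Schreier-support condition $\supp(\alpha_j)\in\mathcal{S}_\xi$ — but the latter is automatic since multiplication by nonzero scalars preserves support. Apart from that, the argument is just normalization followed by the hypothesis applied with a smaller tolerance.
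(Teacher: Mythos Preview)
Your proof is correct and follows essentially the same route as the paper: normalize the subsequence with norms near $1$, apply the defining property of $\mathcal{WD}_{\ell_p}^{(C,\xi)}$ with a smaller tolerance, and then absorb the rescaling factors (which preserve $\mathcal{S}_\xi$-support) into the $\epsilon$. The only cosmetic difference is in the bookkeeping of the small parameters---the paper picks $\delta$ with $1<\delta<1+\frac{\epsilon}{2C}$ and tolerance $\frac{\epsilon}{2\delta}$, whereas your choice $(C+\epsilon')(1+\delta)\le C+\epsilon$ is arguably cleaner and also covers the case $C=0$ without adjustment.
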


\begin{proof}We need only prove the ``only if'' part since the ``if'' part is obvious.  Suppose $T\in\mathcal{WD}_{\ell_p}^{(C,\xi)}(X,Y)$.  Let $(x_n)$ be a seminormalized weakly null sequence with a subsequence tending to 1 in norm, and pick $\epsilon>0$.  Let $1<\delta<1+\frac{\epsilon}{2C}$, which gives us $C\delta+\frac{\epsilon}{2}<C+\epsilon$, and pass to a further subsequence so that $\norm{x_{n_k}}\leq\delta$ for all $k$.  By definition of $T\in\mathcal{WD}_{\ell_p}^{(C,\xi)}(X,Y)$ we can pass to yet a further subsequence so that $(\frac{x_{n_k}}{\norm{x_{n_k}}})$ satisfies $\norm{\sum\alpha_kT\frac{x_{n_k}}{\norm{x_{n_k}}}}\leq(C+\frac{\epsilon}{2\delta})\norm{(\alpha_k)}_{\ell_p}$ for all $(\alpha_k)\in\mathcal{A}_\xi$.  Since also $(\norm{x_{n_k}}\alpha_k)\in\mathcal{A}_\xi$ for each $(\alpha_k)\in\mathcal{A}_\xi$, we get

\begin{multline*}\norm{\sum\alpha_kTx_{n_k}}=\norm{\sum\norm{x_{n_k}}\alpha_kT\frac{x_{n_k}}{\norm{x_{n_k}}}}\leq\left(C+\frac{\epsilon}{2\delta}\right)\norm{(\norm{x_{n_k}}\alpha_k)}_{\ell_p}\\\\\leq\left(C\delta+\frac{\epsilon}{2}\right)\norm{(\alpha_k)}_{\ell_p}\leq(C+\epsilon)\norm{(\alpha_k)}_{\ell_p}.\end{multline*}\end{proof}


\begin{proposition}\label{norm-null}Let $X$ and $Y$ be Banach spaces, and fix constants $0\leq C<\infty$ and $1\leq p\leq\infty$.  If $(x_n)\subset X$ is a sequence for which $(Tx_n)$ has a norm-null subsequence, then given $\epsilon>0$, there exists a further subsequence $(x_{n_k})$ for which the estimate $\norm{\sum\alpha_kTx_{n_k}}\leq(C+\epsilon)\norm{(\alpha_k)}_{\ell_p}$ holds for all $(\alpha_k)\in c_{00}$.\end{proposition}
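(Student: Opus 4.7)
The plan is to exploit the fact that rapidly decaying scalar sequences satisfy a trivial upper $\ell_p$-estimate via Hölder's inequality. Since $(Tx_n)$ has a norm-null subsequence, I have complete freedom to prescribe how fast $\|Tx_{n_k}\|$ tends to zero along a further subsequence.

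First I would extract from $(x_n)$ a subsequence, still denoted $(Tx_{n_k})$, with $\|Tx_{n_k}\| \to 0$. Let $q \in [1,\infty]$ be the conjugate exponent to $p$. I would then pass to a further subsequence (keeping the notation $(x_{n_k})$) chosen so that $\|Tx_{n_k}\| \leq \delta_k$ for some positive scalars $\delta_k$ satisfying $\|(\delta_k)\|_{\ell_q} \leq \epsilon$; for instance, $\delta_k := \epsilon \cdot 2^{-k}$ works when $q = \infty$, and $\delta_k := \epsilon\,2^{-k/q}/\|(2^{-k/q})\|_{\ell_q}$ works in the remaining cases. Since $\|Tx_{n_k}\| \to 0$ we can thin the subsequence until this pointwise majorization holds.

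Then for any $(\alpha_k) \in c_{00}$, the triangle inequality followed by Hölder's inequality gives
\[
\Bigl\|\sum \alpha_k T x_{n_k}\Bigr\| \leq \sum |\alpha_k|\,\|Tx_{n_k}\| \leq \sum |\alpha_k|\,\delta_k \leq \|(\alpha_k)\|_{\ell_p}\,\|(\delta_k)\|_{\ell_q} \leq \epsilon\,\|(\alpha_k)\|_{\ell_p},
\]
and since $C \geq 0$, this is bounded above by $(C+\epsilon)\|(\alpha_k)\|_{\ell_p}$, which is the desired estimate.

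There is no real obstacle here; the only minor point requiring care is the uniform handling of the endpoint cases $p = 1$ and $p = \infty$, which amount to interpreting $\|(\delta_k)\|_{\ell_q}$ as $\sup_k \delta_k$ or $\sum_k \delta_k$ respectively, both of which are just as easy to make arbitrarily small along a suitable subsequence. Note also that the resulting estimate has nothing to do with the Schreier parameter $\xi$, which matches the fact that the conclusion is stated for all of $c_{00}$ rather than only $\mathcal{A}_\xi$.
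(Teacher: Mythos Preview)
Your proof is correct and follows essentially the same approach as the paper: choose a subsequence along which $\norm{Tx_{n_k}}$ decays fast enough that $(\norm{Tx_{n_k}})\in\ell_q$ with small norm, then apply the triangle inequality and H\"older. The paper simply takes $\delta_k=\epsilon\,2^{-k}$ uniformly (noting $\norm{(2^{-k})}_{\ell_q}\leq 1$ for every $1\leq q\leq\infty$), whereas you split into cases to enforce $\norm{(\delta_k)}_{\ell_q}\leq\epsilon$ exactly; either choice yields the same estimate.
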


\begin{proof}Pick a subsequence so that $\norm{Tx_{n_k}}\leq\epsilon 2^{-k}$ and hence, by H\"older, if $q$ is conjugate to $p$ so that $\frac{1}{p}+\frac{1}{q}=1$, $\norm{\sum\alpha_kTx_{n_k}}\leq\epsilon\norm{(2^{-k}\alpha_k)}_{\ell_1}\leq\epsilon\norm{(2^{-k})}_{\ell_q}\norm{(\alpha_k)}_{\ell_p}\leq(C+\epsilon)\norm{(\alpha_k)}_{\ell_p}$ for any sequence $(\alpha_k)\in c_{00}$.\end{proof}

Recall that a linear operator between Banach spaces $X$ and $Y$ is called {\bf completely continuous} just in case it always sends weakly null sequences into norm-null ones.  We write $\mathcal{V}(X,Y)$ for the space of these completely continuous operators.  (As mentioned previously, $\mathcal{V}$ is a norm-closed operator ideal.)  Thus, Proposition \ref{norm-null} yields the following.

\begin{proposition}\label{VinWD}Let $X$ and $Y$ be Banach spaces, and let $1\leq p\leq\infty$, $0\leq C\leq\infty$, and $1\leq\xi<\omega_1$.  Then $\mathcal{V}(X,Y)\subseteq\mathcal{WD}_{\ell_p}^{(C,\xi)}(X,Y)$.\end{proposition}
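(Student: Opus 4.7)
The plan is to derive the inclusion directly from Proposition \ref{norm-null}, which is exactly tailored to this situation. Let $T \in \mathcal{V}(X,Y)$ and let $(x_n) \subset X$ be a normalized weakly null sequence. By the definition of complete continuity, the entire sequence $(Tx_n)$ is norm-null in $Y$; in particular, it has a norm-null subsequence (itself). So the hypothesis of Proposition \ref{norm-null} is satisfied, and given any $\epsilon > 0$ we obtain a subsequence $(x_{n_k})$ with
\[
\bignorm{\sum \alpha_k T x_{n_k}} \leq (C+\epsilon)\,\norm{(\alpha_k)}_{\ell_p}
\]
for all $(\alpha_k) \in c_{00}$. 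Since $\mathcal{A}_\xi \subset c_{00}$, the same estimate holds in particular for all $(\alpha_k) \in \mathcal{A}_\xi$. This verifies the defining condition of $\mathcal{WD}_{\ell_p}^{(C,\xi)}(X,Y)$ and hence places $T$ in that class.

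For the $C = \infty$ case, there is nothing more to do: $\mathcal{WD}_{\ell_p}^{(\infty,\xi)}(X,Y)$ is defined as the union $\bigcup_{C \geq 0} \mathcal{WD}_{\ell_p}^{(C,\xi)}(X,Y)$, so any one of the finite-$C$ inclusions already forces $\mathcal{V}(X,Y) \subseteq \mathcal{WD}_{\ell_p}^{(\infty,\xi)}(X,Y)$. There is no real obstacle here; the heavy lifting has already been absorbed into Proposition \ref{norm-null}, whose Hölder-inequality trick with the geometric weights $\epsilon 2^{-k}$ makes the bound work for every $p$ uniformly. The only thing one should be careful about is that the Schreier-family restriction on supports imposes no additional difficulty, precisely because the estimate from Proposition \ref{norm-null} is valid for all finitely supported sequences without reference to $\xi$.
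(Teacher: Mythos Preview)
Your argument is correct and is exactly the paper's approach: the paper simply states that Proposition~\ref{norm-null} yields Proposition~\ref{VinWD}, and you have faithfully spelled out that deduction, including the trivial $C=\infty$ case.
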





Let us observe, via several steps, that the class $\mathcal{WD}_{\ell_p}^{(\infty,\xi)}$ is indeed an operator ideal.

\begin{proposition}\label{multideal}Let $W$, $X$, $Y$, and $Z$ be Banach spaces, and fix constants $1\leq p\leq\infty$ and $0\leq C<\infty$, and some ordinal $1\leq\xi\leq\omega_1$.  Suppose $T\in\mathcal{WD}_{\ell_p}^{(C,\xi)}(X,Y)$ with $A\in\mathcal{L}(W,X)$ and $B\in\mathcal{L}(Y,Z)$.  Then $TA\in\mathcal{WD}_{\ell_p}^{(C\norm{A},\xi)}(W,Y)$ and $BT\in\mathcal{WD}_{\ell_p}^{(C\norm{B},\xi)}(X,Z)$.\end{proposition}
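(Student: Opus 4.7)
The plan is to verify the two ideal composition properties separately; the postcomposition $BT$ is nearly immediate from the definition, while the precomposition $TA$ requires care because $A$ may not preserve normalization of weakly null sequences.

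For $BT$, let $(x_n) \subset X$ be normalized weakly null, fix $\epsilon > 0$, and assume without loss of generality that $\|B\| > 0$ (otherwise $BT = 0$). Setting $\epsilon' = \epsilon/\|B\|$ and applying the hypothesis $T \in \mathcal{WD}_{\ell_p}^{(C,\xi)}(X,Y)$, I obtain a subsequence $(x_{n_k})$ with $\|\sum \alpha_k T x_{n_k}\| \leq (C+\epsilon')\|(\alpha_k)\|_{\ell_p}$ for every $(\alpha_k) \in \mathcal{A}_\xi$. Applying $B$ and using $\|B\|(C+\epsilon') = C\|B\|+\epsilon$ finishes this case.

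For $TA$, let $(w_n) \subset W$ be normalized weakly null and assume $\|A\| > 0$. Since $A$ is (weak-to-weak) continuous, $(Aw_n)$ is weakly null in $X$, and $\|Aw_n\| \leq \|A\|$. Passing to a subsequence, I may assume $\|Aw_n\| \to \lambda$ for some $\lambda \in [0,\|A\|]$. If $\lambda = 0$, then $\|TAw_n\| \to 0$ and Proposition \ref{norm-null} yields a further subsequence along which $\|\sum \alpha_k TAw_{n_k}\| \leq (C\|A\|+\epsilon)\|(\alpha_k)\|_{\ell_p}$ for all $(\alpha_k) \in c_{00}$, hence in particular for all $(\alpha_k) \in \mathcal{A}_\xi$. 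If $\lambda > 0$, then the sequence $(Aw_n/\lambda)$ is eventually seminormalized, weakly null, and has norms tending to $1$. Setting $\epsilon' = \epsilon/\|A\|$ and applying Proposition \ref{seminormalized-tends-to-1} to $T$ on this sequence, I extract a further subsequence along which $\|\sum \alpha_k T(Aw_{n_k}/\lambda)\| \leq (C+\epsilon')\|(\alpha_k)\|_{\ell_p}$. Multiplying through by $\lambda$ and using $\lambda \leq \|A\|$ gives
\[
\Bigl\|\sum \alpha_k T A w_{n_k}\Bigr\| \leq \lambda(C+\epsilon')\|(\alpha_k)\|_{\ell_p} \leq \|A\|(C+\epsilon')\|(\alpha_k)\|_{\ell_p} = (C\|A\|+\epsilon)\|(\alpha_k)\|_{\ell_p},
\]
as required.

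The only non-routine point is that $A$ can collapse weakly null sequences or merely distort their norms, so $(Aw_n)$ need be neither normalized nor bounded away from zero; the dichotomy into the $\lambda = 0$ and $\lambda > 0$ cases, together with the two preparatory propositions, exactly absorbs this difficulty. The constant $C\|A\|$ (resp.\ $C\|B\|$) is tight because we bound $\lambda$ (resp.\ $\|B\|$ directly) by the operator norm at the end.
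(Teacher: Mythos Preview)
Your proof is correct and follows essentially the same approach as the paper's: the $BT$ case is handled identically (choosing $\epsilon' = \epsilon/\norm{B}$), and for $TA$ you make the same dichotomy on whether the image sequence $(Aw_n)$ has norms tending to zero or to a positive limit, invoking Propositions~\ref{norm-null} and~\ref{seminormalized-tends-to-1} respectively. The only cosmetic difference is that the paper chooses $\epsilon/\delta$ (your $\epsilon/\lambda$) rather than $\epsilon/\norm{A}$ when applying Proposition~\ref{seminormalized-tends-to-1}, but both choices lead to the same final estimate.
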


\begin{proof}Let's first show that $TA\in\mathcal{WD}_{\ell_p}^{(C\norm{A},\xi)}(W,Y)$.  Recall that an operator is weak-to-weak continuous if and only if it is norm-to-norm continuous.  Thus if $(w_n)$ is a normalized weakly null sequence in $W$, we get that $(Aw_n)$ is weakly null in $X$.  If it contains a norm-null subsequence then so does $TAw_n$, and so by Proposition \ref{norm-null} we are done.  Otherwise, we can pass to a subsequence if necessary so that $\norm{Aw_n}\to\delta$ for some $0<\delta\leq\norm{A}$.  Hence $\norm{\delta^{-1}Aw_n}\to 1$, and by Proposition \ref{seminormalized-tends-to-1} we get, for any $\epsilon>0$, a subsequence $(n_k)$ satisfying $\norm{\sum\alpha_kT\delta^{-1}Aw_{n_k}}\leq(C+\frac{\epsilon}{\delta})\norm{(\alpha_k)}_{\ell_p}$ and hence $\norm{\sum\alpha_kTAw_{n_k}}\leq(C\delta+\epsilon)\norm{(\alpha_k)}_{\ell_p}\leq(C\norm{A}+\epsilon)\norm{(\alpha_k)}_{\ell_p}$ for all $(\alpha_k)\in\mathcal{A}_\xi$.

Next, we show that $BT\in\mathcal{WD}_{\ell_p}^{(C\norm{B},\xi)}(X,Z)$.  Fix a normalized weakly null sequence $(x_n)\subset X$, and let $\epsilon>0$.  To make things nontrivial, we may assume $B\neq 0$.  Then we can find a subsequence $(x_{n_k})$ such that for all $(\alpha_k)\in\mathcal{A}_\xi$ we get $\norm{\sum\alpha_kTx_{n_k}}\leq(C+\frac{\epsilon}{\norm{B}})\norm{(\alpha_k)}_{\ell_p}$ and hence $\norm{\sum\alpha_kBTx_{n_k}}\leq\norm{B}\norm{\sum\alpha_kTx_{n_k}}\leq(C\norm{B}+\epsilon)\norm{(\alpha_k)}_{\ell_p}$.\end{proof}






By ``pushing out'' a scalar sequence $(\alpha_k)\in\mathcal{A}_\xi$, and using the spreading property of $\mathcal{S}_\xi$, we obtain the following obvious Lemma.

\begin{lemma}\label{subsequence-hereditary}Let $Y$ be a Banach space, and fix an ordinal $1\leq\xi\leq\omega_1$.  Suppose $(y_n)$ and $(y'_k)$ are sequences in $Y$ such that $(y'_k)_{k\geq m}$ is a subsequence of $(y_n)_{n\geq m}$ for some $m\in\mathbb{N}$.  If $(\alpha_k)\in\mathcal{A}_\xi$ satisfies $\min\text{supp}(\alpha_k)\geq m$ then there exists $(\beta_n)\in\mathcal{A}_\xi$ which satisfies $\sum\alpha_ky'_k=\sum\beta_ny_n$ and $\norm{(\alpha_k)}_{\ell_p}=\norm{(\beta_n)}_{\ell_p}$ for all $1\leq p\leq\infty$.
\end{lemma}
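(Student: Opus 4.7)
The plan is to define $(\beta_n)$ by transporting $(\alpha_k)$ along the subsequence indexing and then verify the two identities, with the only genuinely substantive step being the check that $\text{supp}(\beta_n)\in\mathcal{S}_\xi$. Since $(y'_k)_{k\geq m}$ is a subsequence of $(y_n)_{n\geq m}$, there is a strictly increasing map $\sigma:\{m,m+1,\ldots\}\to\{m,m+1,\ldots\}$ with $y'_k=y_{\sigma(k)}$ for all $k\geq m$. Because $\sigma$ is strictly increasing on $\{m,m+1,\ldots\}$ with $\sigma(m)\geq m$, a one-line induction yields $\sigma(k)\geq k$ for every $k\geq m$.

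Using the hypothesis $\min\text{supp}(\alpha_k)\geq m$, every index $k$ with $\alpha_k\neq 0$ lies in the domain of $\sigma$, so I can unambiguously set $\beta_{\sigma(k)}:=\alpha_k$ for $k\in\text{supp}(\alpha_k)$ and $\beta_n:=0$ for all other $n$; injectivity of $\sigma$ makes this well-defined. With this choice, the identity $\sum_n\beta_ny_n=\sum_k\alpha_ky_{\sigma(k)}=\sum_k\alpha_ky'_k$ is a tautology, and the $\ell_p$-equality $\norm{(\beta_n)}_{\ell_p}=\norm{(\alpha_k)}_{\ell_p}$ for every $1\leq p\leq\infty$ is clear since $(\beta_n)$ is obtained from $(\alpha_k)$ by inserting zeros and relocating the nonzero entries along an injection.

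What remains, and is the only place that uses the structure of the Schreier families, is to verify $(\beta_n)\in\mathcal{A}_\xi$. Writing $\text{supp}(\alpha_k)=\{k_1<\cdots<k_j\}\in\mathcal{S}_\xi$, we have $\text{supp}(\beta_n)=\{\sigma(k_1)<\cdots<\sigma(k_j)\}$ with $\sigma(k_i)\geq k_i$ for each $i$, so the spreading property of $\mathcal{S}_\xi$ recorded in the introduction immediately delivers $\text{supp}(\beta_n)\in\mathcal{S}_\xi$. There is no real obstacle; the lemma simply packages the spreading property in the exact form needed whenever one wishes to pass from the original weakly null sequence to a further subsequence while still quantifying over coefficient sequences indexed along the new subsequence.
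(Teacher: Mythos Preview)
Your proof is correct and is precisely the argument the paper has in mind: the paper does not give a formal proof but merely remarks that the lemma follows by ``pushing out'' the scalar sequence and invoking the spreading property of $\mathcal{S}_\xi$, which is exactly what your construction of $(\beta_n)$ via the injection $\sigma$ accomplishes.
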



\begin{proposition}\label{addition}Let $X$ and $Y$ be Banach spaces, and fix constants $0\leq C,D<\infty$ and $1\leq p\leq\infty$.  Then for any $S\in\mathcal{WD}_{\ell_p}^{(C,\xi)}(X,Y)$ and $T\in\mathcal{WD}_{\ell_p}^{(D,\xi)}(X,Y)$ we have $S+T\in\mathcal{WD}_{\ell_p}^{(C+D,\xi)}(X,Y)$.\end{proposition}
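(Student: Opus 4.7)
The plan is to prove $S+T \in \mathcal{WD}_{\ell_p}^{(C+D,\xi)}(X,Y)$ by a standard two-stage subsequence extraction, combined with the triangle inequality. Fix a normalized weakly null sequence $(x_n) \subset X$ and some $\epsilon > 0$. First, I will apply the definition of $S \in \mathcal{WD}_{\ell_p}^{(C,\xi)}(X,Y)$ to the sequence $(x_n)$ with tolerance $\epsilon/2$, obtaining a subsequence $(y_n)$ of $(x_n)$ for which $\norm{\sum \alpha_n S y_n} \leq (C + \epsilon/2)\norm{(\alpha_n)}_{\ell_p}$ for every $(\alpha_n) \in \mathcal{A}_\xi$. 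Since $(y_n)$ is itself a normalized weakly null sequence in $X$, I will then apply the definition of $T \in \mathcal{WD}_{\ell_p}^{(D,\xi)}(X,Y)$ to it with tolerance $\epsilon/2$, obtaining a further subsequence $(z_n)$ of $(y_n)$ satisfying $\norm{\sum \alpha_n T z_n} \leq (D + \epsilon/2)\norm{(\alpha_n)}_{\ell_p}$ for every $(\alpha_n) \in \mathcal{A}_\xi$.

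The subtlety I have to address is that the first estimate was established for $(y_n)$, not for the further subsequence $(z_n)$, so a priori it is not clear that the estimate for $S$ persists on $(z_n)$. This is precisely where Lemma \ref{subsequence-hereditary} (which encodes the spreading property of $\mathcal{S}_\xi$) comes in: given any $(\alpha_n) \in \mathcal{A}_\xi$, I can transfer the sum $\sum \alpha_n S z_n$ into a sum $\sum \beta_m S y_m$ with $(\beta_m) \in \mathcal{A}_\xi$ having the same $\ell_p$-norm, so that the first-stage estimate applies verbatim to yield $\norm{\sum \alpha_n S z_n} \leq (C + \epsilon/2)\norm{(\alpha_n)}_{\ell_p}$. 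Once both estimates hold simultaneously on $(z_n)$, the triangle inequality gives
\begin{equation*}
\bignorm{\sum \alpha_n (S+T) z_n} \leq \bignorm{\sum \alpha_n S z_n} + \bignorm{\sum \alpha_n T z_n} \leq (C+D+\epsilon)\norm{(\alpha_n)}_{\ell_p}
\end{equation*}
for every $(\alpha_n) \in \mathcal{A}_\xi$, which is exactly the desired estimate.

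The main (and really only) obstacle is the persistence issue described above: the $S$-estimate is extracted first and must survive the subsequent passage to a subsequence needed for the $T$-estimate. This is not a deep difficulty, but it is where the argument genuinely uses the structural hypothesis on $\mathcal{A}_\xi$. Everything else, including compatibility with the parameter $C$, reduces to bookkeeping with $\epsilon/2$ and the triangle inequality.
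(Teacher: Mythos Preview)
Your proposal is correct and follows essentially the same approach as the paper's proof: a two-stage subsequence extraction (first for $S$ with tolerance $\epsilon/2$, then for $T$ with tolerance $\epsilon/2$), an appeal to Lemma~\ref{subsequence-hereditary} to transfer the $S$-estimate to the further subsequence, and the triangle inequality to conclude. The paper's argument differs only in notation, writing the subsequences as $(x_{n_k})$ and $(x_{n_{k_i}})$ rather than $(y_n)$ and $(z_n)$.
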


\begin{proof}Let $\epsilon>0$ and pick a normalized weakly null sequence $(x_n)\subset X$.  By definition of $S\in\mathcal{WD}_{\ell_p}^{(C,\xi)}(X,Y)$ applied to $\frac{\epsilon}{2}>0$ and $(x_n)$ we get a subsequence $(x_{n_k})$ such that for all $(\alpha_k)\in\mathcal{A}_\xi$, the estimate $\norm{\sum\alpha_kSx_{n_k}}\leq(C+\frac{\epsilon}{2})\norm{(\alpha_k)}_{\ell_p}$ holds.  Next, apply the definition of $T\in\mathcal{WD}_{\ell_p}^{(D,\xi)}(X,Y)$ to $\frac{\epsilon}{2}>0$ and $(x_{n_k})$ to find a further subsequence $(k_i)$ such that for all $(\alpha_i)\in\mathcal{A}_\xi$ we get $\norm{\sum\alpha_iTx_{n_{k_i}}}\leq(D+\frac{\epsilon}{2})\norm{(\alpha_i)}_{\ell_p}$.  Notice that since $(x_{n_{k_i}})$ is a subsequence of $(x_{n_k})$, then for each scalar sequence $(\alpha_i)\in\mathcal{A}_\xi$, by Lemma \ref{subsequence-hereditary}, $\norm{\sum\alpha_i(S+T)x_{n_{k_i}}}\leq\norm{\sum\alpha_iSx_{n_{k_i}}}+\norm{\sum\alpha_iTx_{n_{k_i}}}\leq(C+D+\epsilon)\norm{(\alpha_i)}_{\ell_p}$.\end{proof}

From Propositions \ref{VinWD}, \ref{multideal}, and \ref{addition}, it now follows immediately that $\mathcal{WD}_{\ell_p}^{(\infty,\xi)}$ is an operator ideal.  In fact, the same combination of Propositions shows that $\mathcal{WD}_{\ell_p}^{(0,\xi)}$ is an operator ideal, but it turns out that, using Proposition \ref{VinWD} along with the fact that every family $\mathcal{S}_\xi$ contains all the singletons, regardless of our choice of $p$ or $\xi$ we always get $\mathcal{WD}_{\ell_p}^{(0,\xi)}=\mathcal{V}$, the completely continuous operators.




Let us now construct two important examples.

\begin{example}\label{compact}Let $X$ be a Banach space which fails to contain a copy of $\ell_1$.  (This is true in particular if $X$ is reflexive.)  Fix constants $1\leq q<p\leq\infty$ and $0\leq C\leq\infty$, and some countable ordinal $1\leq\xi<\omega_1$.  Then $\mathcal{WD}_{\ell_p}^{(C,\xi)}(X,\ell_q)=\mathcal{K}(X,\ell_q)$.\end{example}

\begin{proof}Assume $0\leq C<\infty$.  By Proposition \ref{VinWD} we already have $\mathcal{K}(X,\ell_q)\subseteq\mathcal{V}(X,\ell_q)\subseteq\mathcal{WD}_{\ell_p}^{(C,\xi)}(X,\ell_q)$, and so it suffices to prove $\mathcal{WD}_{\ell_p}^{(C,\xi)}(X,\ell_q)$ contains only compact operators.  For suppose towards a contradiction that there exists $T\in\mathcal{WD}_{\ell_p}^{(C,\xi)}(X,\ell_q)$ which is not compact.  Then we can find a seminormalized sequence $(x_n)\subset X$ for which $(Tx_n)$ fails to have a convergent subsequence.  Since $X$ fails to contain a copy of $\ell_1$, we can apply Rosenthal's $\ell_1$ Theorem to pass to a subsequence so that $(x_n)$ is weak Cauchy.  Hence we can pass to a further subsequence if necessary so that $(x_{2n}-x_{2n+1})$ and $(Tx_{2n}-Tx_{2n+1})$ are both weakly null and seminormalized.  This means the sequence $(x'_n)$ defined by $x'_n:=(x_{2n}-x_{2n+1})/\norm{x_{2n}-x_{2n+1}}$ is normalized and weakly null, whereas the sequence $(Tx'_n)$ is seminormalized and weakly null.  By passing to yet another subsequence if necessary, by Proposition 2.1.3 in \cite{AK06} we can assume $(Tx'_n)$ is $K$-equivalent, $K\geq 1$, to the unit vector basis of $\ell_q$.  Thus, by this equivalence together with the definition of $T\in\mathcal{WD}_{\ell_p}^{(C,\xi)}(X,Y)$, for any $\epsilon>0$ we can find a subsequence $(n_k)$ such that $\norm{(\alpha_k)}_{\ell_q}\leq K\norm{\sum\alpha_kTx'_{n_k}}\leq K(C+\epsilon)\norm{(\alpha_k)}_{\ell_p}$ for all $(\alpha_k)\in\mathcal{A}_\xi$.  Due to $\mathcal{S}_1\subseteq\mathcal{S}_\xi$, the above inequality holds also for all $(\alpha_k)\in\mathcal{A}_1$.  Notice that every $(\beta_k)\in c_{00}$ induces a corresponding ``spread out'' sequence $(\alpha_k)\in\mathcal{A}_1$ satisfying $\norm{(\alpha_k)}_{\ell_r}=\norm{(\beta_k)}_{\ell_r}$ for all $1\leq r\leq\infty$.  Thus we obtain the impossible estimate $\norm{(\beta_k)}_{\ell_q}=\norm{(\alpha_k)}_{\ell_q}\leq K(C+\epsilon)\norm{(\alpha_k)}_{\ell_p}=K(C+\epsilon)\norm{(\beta_k)}_{\ell_p}$ for all $(\beta_k)\in c_{00}$.\end{proof}


\begin{example}\label{whole-space}Let $X$ and $Y$ be Banach spaces, and fix numbers $1\leq p\leq q<\infty$ and an ordinal $1\leq\xi\leq\omega_1$.  Suppose $T\in\mathcal{L}(X,Y)$ is an operator such that $TX$ has a $K$-embedding, $K\geq 1$, into $\ell_q$.  (In other words, suppose there is an operator $Q\in\mathcal{L}(TX,\ell_q)$ which satisfies $K^{-1}\norm{y}\leq\norm{Qy}\leq K\norm{y}$ for all $y\in TX$.)  Then $T\in\mathcal{WD}_{\ell_p}^{(K^2\norm{T},\xi)}(X,Y)$, and the same result holds if $1\leq p\leq\infty$ and $TX$ has a $K$-embedding into $c_0$.  Thus, for $1\leq p\leq q<\infty$ we have $\mathcal{WD}_{\ell_p}^{(\infty,\xi)}(X,\ell_q)=\mathcal{L}(X,\ell_q)$, and for $1\leq p\leq\infty$ we have $\mathcal{WD}_{\ell_p}^{(\infty,\xi)}(X,c_0)=\mathcal{L}(X,c_0)$.\end{example}

\begin{proof}Fix a normalized weakly null sequence $(x_n)\subset X$, and denote by $Q\in\mathcal{L}(TX,\ell_q)$ (resp. $Q\in\mathcal{L}(TX,c_0)$) the $K$-embedding.  If $(Tx_n)$ contains a norm-null subsequence then we are done by Proposition \ref{norm-null}.  Otherwise let $\epsilon>0$, and find a subsequence so that $\norm{QTx_{n_k}}\to r$ with $0<r\leq K\norm{T}$, and quickly enough so that by the uniform version of Bessaga-Pe\l czy\'{n}ski combined with Lemma 2.1.1 in \cite{AK06}, we can pass to a further subsequence if necessary so that $(\frac{1}{r}QTx_{n_k})$ is $(1+\frac{\epsilon}{Kr})$-equivalent to the unit vector basis of $\ell_q$ (resp. $c_0$).  This gives us, in the $\ell_q$ case,

\begin{multline*}\norm{\sum\alpha_kTx_{n_k}}\leq K\norm{\sum\alpha_kQTx_{n_k}}=Kr\norm{\sum\alpha_k\frac{1}{r}QTx_{n_k}}\\\\\leq Kr\left(1+\frac{\epsilon}{Kr}\right)\norm{(\alpha_k)}_{\ell_q}\leq(K^2\norm{T}+\epsilon)\norm{(\alpha_k)}_{\ell_q}\leq(K^2\norm{T}+\epsilon)\norm{(\alpha_k)}_{\ell_p}\end{multline*}

\noindent for all $(\alpha_k)\in c_{00}$, and a similar inequality holds in the $c_0$ case.\end{proof}

We must lay some groundwork aimed at showing that, in case $\xi=1$, the class $\mathcal{WD}_{\ell_p}^{(\infty,1)}$ forms a Banach ideal.  We begin by defining a seminorm on the linear space $\mathcal{WD}_{\ell_p}^{(\infty,\xi)}(X,Y)$.

\begin{definition}Let $X$ and $Y$ be Banach spaces, and fix a constant $1\leq p\leq\infty$ and an ordinal $1\leq\xi\leq\omega_1$.  For each $T\in\mathcal{WD}_{\ell_p}^{(\infty,\xi)}(X,Y)$, we define $C_{(p,\xi)}(T):=\inf\{0\leq C<\infty:T\in\mathcal{WD}_{\ell_p}^{(C,\xi)}(X,Y)\}$.\end{definition}


\begin{proposition}\label{seminorm}Let $X$ and $Y$ be Banach spaces, and fix a constant $1\leq p\leq\infty$ and an ordinal $1\leq\xi\leq\omega_1$.  If $T\in\mathcal{WD}_{\ell_p}^{(\infty,\xi)}(X,Y)$ then $T\in\mathcal{WD}_{\ell_p}^{( C_{(p,\xi)}(T),\xi)}(X,Y)$.  Furthermore, $T\mapsto C_{(p,\xi)}(T)$ defines a seminorm on the linear space $\mathcal{WD}_{\ell_p}^{(\infty,\xi)}(X,Y)$.\end{proposition}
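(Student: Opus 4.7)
The plan is to first establish the attainment claim (that the infimum defining $C_{(p,\xi)}(T)$ is actually achieved), and then to derive the three seminorm axioms using this together with Propositions \ref{multideal} and \ref{addition}.

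For the attainment $T\in\mathcal{WD}_{\ell_p}^{(C_{(p,\xi)}(T),\xi)}(X,Y)$: write $C:=C_{(p,\xi)}(T)$, and fix an arbitrary $\epsilon>0$ together with a normalized weakly null sequence $(x_n)\subset X$. By definition of the infimum, I can choose some $C'$ with $C\leq C'<C+\tfrac{\epsilon}{2}$ and $T\in\mathcal{WD}_{\ell_p}^{(C',\xi)}(X,Y)$. Then, applying the definition of $\mathcal{WD}_{\ell_p}^{(C',\xi)}(X,Y)$ to the tolerance $\tfrac{\epsilon}{2}$ and the sequence $(x_n)$, I obtain a subsequence $(x_{n_k})$ with $\norm{\sum\alpha_k Tx_{n_k}}\leq(C'+\tfrac{\epsilon}{2})\norm{(\alpha_k)}_{\ell_p}\leq(C+\epsilon)\norm{(\alpha_k)}_{\ell_p}$ for every $(\alpha_k)\in\mathcal{A}_\xi$. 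Since $(x_n)$ and $\epsilon$ were arbitrary, this proves $T\in\mathcal{WD}_{\ell_p}^{(C,\xi)}(X,Y)$. The crucial point is that the $\epsilon$-flexibility built into the definition lets me pass from a family of approximants to the infimum itself, so there is no need to construct a subsequence by diagonalization.

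Next I verify the seminorm axioms. Non-negativity is immediate from $C_{(p,\xi)}(T)\geq 0$. For absolute homogeneity, the case $\lambda=0$ is handled by noting that the zero operator is completely continuous, hence in $\mathcal{WD}_{\ell_p}^{(0,\xi)}(X,Y)$ by Proposition \ref{VinWD}, which forces $C_{(p,\xi)}(0)=0$. For $\lambda\neq 0$, write $\lambda T=B\circ T$ with $B:=\lambda\cdot\id_Y$ of norm $|\lambda|$. Proposition \ref{multideal} then gives $\lambda T\in\mathcal{WD}_{\ell_p}^{(|\lambda|C,\xi)}(X,Y)$ whenever $T\in\mathcal{WD}_{\ell_p}^{(C,\xi)}(X,Y)$, so $C_{(p,\xi)}(\lambda T)\leq|\lambda|C_{(p,\xi)}(T)$; the reverse inequality follows by applying the same argument to $\lambda^{-1}(\lambda T)=T$.

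Finally, the triangle inequality follows directly from the attainment claim and Proposition \ref{addition}: for any $S,T\in\mathcal{WD}_{\ell_p}^{(\infty,\xi)}(X,Y)$, the attainment step gives $S\in\mathcal{WD}_{\ell_p}^{(C_{(p,\xi)}(S),\xi)}(X,Y)$ and $T\in\mathcal{WD}_{\ell_p}^{(C_{(p,\xi)}(T),\xi)}(X,Y)$, whence Proposition \ref{addition} yields $S+T\in\mathcal{WD}_{\ell_p}^{(C_{(p,\xi)}(S)+C_{(p,\xi)}(T),\xi)}(X,Y)$ and therefore $C_{(p,\xi)}(S+T)\leq C_{(p,\xi)}(S)+C_{(p,\xi)}(T)$. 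The main conceptual hurdle is the attainment step, since in general infima of parametrized conditions need not be attained; but the built-in $\epsilon$-tolerance in the definition of $\mathcal{WD}_{\ell_p}^{(C,\xi)}$ absorbs the gap cleanly, and all remaining work is routine bookkeeping.
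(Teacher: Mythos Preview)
Your proof is correct and follows essentially the same approach as the paper for the attainment claim. For the seminorm axioms you take a slightly more modular route: you derive homogeneity from Proposition~\ref{multideal} and the triangle inequality from Proposition~\ref{addition}, whereas the paper declares homogeneity obvious and reproves the triangle inequality inline (essentially duplicating the argument of Proposition~\ref{addition} with the constants $C_{(p,\xi)}(S)$ and $C_{(p,\xi)}(T)$). Your version is arguably cleaner since it avoids repeating work already done, but the underlying mathematics is identical.
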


\begin{proof}The first part of the Proposition is clear from applying the definition of $T\in\mathcal{WD}_{\ell_p}^{(C,\xi)}(X,Y)$ for $ C_{(p,\xi)}(T)<C<\infty$, and absolute homogeneity is similarly obvious.  The only thing nontrivial to show is that the triangle inequality holds.  Indeed, let $S,T\in\mathcal{WD}_{\ell_p}^{(\infty,\xi)}(X,Y)$, and suppose $(x_n)$ is a normalized weakly null sequence.  Let $\epsilon>0$.  Then we can apply the definition of $S\in\mathcal{WD}_{\ell_p}^{( C_{(p,\xi)}(S),\xi)}(X,Y)$ to $(x_n)$ and $\frac{\epsilon}{2}>0$ to find a subsequence $(n_k)$ satisfying $\norm{\sum\alpha_kSx_{n_k}}\leq( C_{(p,\xi)}(S)+\frac{\epsilon}{2})\norm{(\alpha_k)}_{\ell_p}$  for all $(\alpha_k)\in\mathcal{A}_\xi$.  Then, we successively apply the definition of $T\in\mathcal{WD}_{\ell_p}^{( C_{(p,\xi)}(T),\xi)}(X,Y)$ to $(x_{n_k})$ and $\frac{\epsilon}{2}>0$ to find to a further subsequence $(n_{k_j})$ so that $\norm{\sum\alpha_jTx_{n_{k_j}}}\leq( C_{(p,\xi)}(T)+\frac{\epsilon}{2})\norm{(\alpha_j)}_{\ell_p}$ for all $(\alpha_j)\in\mathcal{A}_\xi$.  Thus, by these facts together with Lemma \ref{subsequence-hereditary}, 

\begin{multline*}\norm{\sum\alpha_j(S+T)x_{n_{k_j}}}\leq\norm{\sum\alpha_jSx_{n_{k_j}}}+\norm{\sum\alpha_jTx_{n_{k_j}}}
\\\\=\left( C_{(p,\xi)}(S)+\frac{\epsilon}{2}\right)\norm{(\alpha_j)}_{\ell_p}+\left( C_{(p,\xi)}(T)+\frac{\epsilon}{2}\right)\norm{(\alpha_j)}_{\ell_p}\\\\=( C_{(p,\xi)}(S)+ C_{(p,\xi)}(T)+\epsilon)\norm{(\alpha_j)}_{\ell_p}.\end{multline*}

\noindent Thus, $C_{(p,\xi)}(S+T)\leq C_{(p,\xi)}(S)+ C_{(p,\xi)}(T)$, and we are done.\end{proof}

Next we show that $\mathcal{WD}_{\ell_p}^{(\infty,\xi)}$ fails to be norm-closed (as a class) whenever $p\neq 1$.  The main idea toward this end proceeds from the following Lemma.

\begin{lemma}\label{nonclosed-space}Fix constants $1<p\leq\infty$ and $1\leq q<\infty$, and an ordinal $1\leq\xi\leq\omega_1$.  Let $(X_m)$ and $(Y_m)$ be sequences of Banach spaces, and for each $m\in\mathbb{N}$, let $T_m\in\mathcal{WD}_{\ell_p}^{(\infty,\xi)}(X_m,Y_m)$ be an operator satisfying $\norm{T_m}=1$.  If $ C_{(p,\xi)}(T_m)\to\infty$ then there exists a subsequence $(m_j)$ and a sequence of operators $S_j\in\mathcal{WD}_{\ell_p}^{(\infty,\xi)}(X,Y)$ for which $S_j\to S\in\mathcal{L}(X,Y)$ but $S\notin\mathcal{WD}_{\ell_p}^{(\infty,\xi)}(X,Y)$, where we define $X:=(\bigoplus_{j=1}^\infty X_{m_j})_{\ell_q}$ and $Y:=(\bigoplus_{j=1}^\infty Y_{m_j})_{\ell_q}$.\end{lemma}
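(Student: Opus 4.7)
The plan is to form a direct sum $S$ of sufficiently attenuated copies of the $T_{m_j}$ for a carefully chosen subsequence, and to approximate $S$ in operator norm by its finite truncations. To begin, since $C_{(p,\xi)}(T_m)\to\infty$, I would pass to a subsequence $(m_j)$ satisfying $C_{(p,\xi)}(T_{m_j})\geq j\cdot 2^j$. Set attenuation factors $\lambda_j:=2^{-j}$ and $X:=(\bigoplus_j X_{m_j})_{\ell_q}$, $Y:=(\bigoplus_j Y_{m_j})_{\ell_q}$. Write $J_j$ and $P_j$ for the coordinate inclusion $X_{m_j}\to X$ and projection $X\to X_{m_j}$, and $I_j$, $Q_j$ for the analogues on $Y$; each of these has norm one in the $\ell_q$-direct sum. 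I then define $S\in\mathcal{L}(X,Y)$ by $S(x_j)_j:=(\lambda_j T_{m_j}x_j)_j$, and the estimate $\norm{S}\leq\sup_j\lambda_j\leq 1/2$ follows at once from $\norm{T_{m_j}}=1$.

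Next I would define the truncations $S_n:=\sum_{k=1}^n I_k(\lambda_k T_{m_k})P_k\in\mathcal{L}(X,Y)$. A direct computation on the $\ell_q$-norm gives $\norm{S-S_n}\leq\sup_{k>n}\lambda_k=2^{-(n+1)}$, so $S_n\to S$ in operator norm. By the ideal property (Proposition \ref{multideal}), each summand $I_k(\lambda_k T_{m_k})P_k$ lies in $\mathcal{WD}_{\ell_p}^{(\infty,\xi)}(X,Y)$, and hence so does the finite sum $S_n$ by closure under addition (Proposition \ref{addition}).

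The crux is to prove $S\notin\mathcal{WD}_{\ell_p}^{(\infty,\xi)}(X,Y)$. Suppose, toward a contradiction, that $S\in\mathcal{WD}_{\ell_p}^{(C,\xi)}(X,Y)$ for some $0\leq C<\infty$. A direct computation yields $Q_k S J_k=\lambda_k T_{m_k}$ as operators $X_{m_k}\to Y_{m_k}$. Applying Proposition \ref{multideal} twice (with $\norm{J_k}=\norm{Q_k}=1$) therefore gives $\lambda_k T_{m_k}\in\mathcal{WD}_{\ell_p}^{(C,\xi)}(X_{m_k},Y_{m_k})$. By absolute homogeneity of the defining inequality (cf.\ Proposition \ref{seminorm}), one obtains $T_{m_k}\in\mathcal{WD}_{\ell_p}^{(C\cdot 2^k,\xi)}(X_{m_k},Y_{m_k})$, and so $C_{(p,\xi)}(T_{m_k})\leq C\cdot 2^k$. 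This contradicts the choice $C_{(p,\xi)}(T_{m_k})\geq k\cdot 2^k$ whenever $k>C$.

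I expect the main, and really rather minor, obstacle to be bookkeeping: confirming that the coordinate inclusions and projections in the $\ell_q$-direct sum have norm exactly one (standard for $1\leq q<\infty$), and that scaling by $\lambda_k$ passes cleanly through the $\mathcal{WD}_{\ell_p}^{(C,\xi)}$ constant. Everything else reduces to the ideal-theoretic groundwork already established in Propositions \ref{multideal} and \ref{addition}, and the growth rate $C_{(p,\xi)}(T_{m_j})\geq j\cdot 2^j$ was chosen precisely to beat the factor $1/\lambda_k=2^k$ arising from the ideal property in the final contradiction.
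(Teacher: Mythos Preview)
Your proof is correct and uses the identical construction as the paper: the same subsequence $(m_j)$ with $C_{(p,\xi)}(T_{m_j})>j\cdot 2^j$, the same diagonal operator $S=\bigoplus_j 2^{-j}T_{m_j}$, and the same finite truncations $S_n$ converging to $S$ in operator norm.

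The difference lies only in how the two verification steps are executed. For the inclusion $S_n\in\mathcal{WD}_{\ell_p}^{(\infty,\xi)}$, the paper works directly with a normalized weakly null sequence in $X$, projects it coordinatewise, passes to successive subsequences for $j=1,\ldots,i$, and computes an explicit constant $M_i=\norm{(2^{-j}C_j)_{j=1}^i}_{\ell_q}$; this occupies roughly half the proof. You sidestep all of that by invoking Propositions~\ref{multideal} and~\ref{addition} directly, which is legitimate since the lemma only requires $S_n\in\mathcal{WD}_{\ell_p}^{(\infty,\xi)}$ and the explicit value of $M_i$ is never used again. Similarly, for $S\notin\mathcal{WD}_{\ell_p}^{(\infty,\xi)}$, the paper unpacks the definition by exhibiting a bad weakly null sequence inside each coordinate $X_{m_i}$, whereas your argument via $Q_kSJ_k=\lambda_kT_{m_k}$ and Proposition~\ref{multideal} (plus homogeneity of $C_{(p,\xi)}$) reaches the same contradiction more cleanly. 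Your version is a genuine streamlining of the paper's argument with no loss of content.
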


\begin{proof}Define the subsequence by letting $(m_j)$ be an increasing sequence satisfying $C_j:= C_{(p,\xi)}(T_{m_j})>j2^j$ for every $j$.  Next, set $S:=\bigoplus_{j=1}^\infty 2^{-j}T_{m_j}\in\mathcal{L}(X,Y)$.  For each $i$, let $P_i\in\mathcal{L}(Y)$ denote the continuous linear projection onto the first $i$ coordinates of $Y$, and set $S_i:=P_iS$.  It's easy to see that $S_i\to S$.  



Next, we claim that each $S_i\in\mathcal{WD}_{\ell_p}^{(M_i,\xi)}(X,Y)\subseteq\mathcal{WD}_{\ell_p}^{(\infty,\xi)}(X,Y)$, where we set $M_i:=\norm{(2^{-j}C_j)_{j=1}^i}_{\ell_q}$.  Indeed fix any $i\in\mathbb{N}$, and let $(x_n)$ be a normalized weakly null sequence in $X$.  Pick any $\epsilon>0$.  For each $j$, let $\widetilde{X}_j$ be the obvious isometrically isomorphic copy of $X_j$ contained in $X$, and let $U_j:\widetilde{X}_j\to X_j$ be the corresponding isometric isomorphism.  For each $n$, write $x_n=(x_{n,j})_j\in X$.  Then $(x_{n,j})_n$ is a sequence in $X_j$ which is bounded by 1.  If $(x_{n,j})_n$ has a norm-null subsequence, then by Proposition \ref{norm-null} we can find a subsequence $(n_k)$ such that, for all $(\alpha_k)\in c_{00}$,

\begin{equation}\label{inequality}\norm{\sum_k\alpha_kT_{m_j}x_{n_k,j}}\leq\left(C_j+\frac{\epsilon 2^j}{i^{1/q}}\right)\norm{(x_{n_k,j})_k}_{\ell_p}\end{equation}

\noindent Otherwise we can find a subsequence $(n_k)$ so that $\norm{x_{n_k,j}}_{X_j}\to r$ as $k\to\infty$ for some $0<r\leq 1$.  Clearly, $(x_{n,j})_n$ is weakly null in $X_j$, and so by Propositions \ref{seminormalized-tends-to-1} and \ref{seminorm}, we can pass to a further subsequence if necessary so that, for all $(\alpha_k)\in\mathcal{A}_\xi$,


\begin{multline*}\norm{\sum_k\alpha_kT_{m_j}x_{n_k,j}}=r\norm{\sum_k\alpha_kT_{m_j}\frac{x_{n_k,j}}{r}}\\\\\leq r\left(C_j+\frac{\epsilon 2^j}{i^{1/q}r}\right)\norm{(x_{n_k,j})_k}_{\ell_p}\leq\left(C_j+\frac{\epsilon 2^j}{i^{1/q}}\right)\norm{(x_{n_k,j})_k}_{\ell_p}.\end{multline*}

\noindent In either case, for each $j$ and any subsequence of $(x_{n,j})_n$, we can pass to a further subsequence so that the inequality \eqref{inequality} holds for all $(\alpha_k)\in\mathcal{A}_\xi$.

Thus, by successively passing to further subsequences for $j=1,\cdots,i$, due to Lemma \ref{subsequence-hereditary}, we get a subsequence $(n_k)$ such that \eqref{inequality} holds for all $j=1,\cdots,i$ and all $(\alpha_k)\in\mathcal{A}_\xi$.  In particular, this means

\begin{multline*}\norm{\sum_k\alpha_kS_ix_{n_k}}=\left(\sum_{j=1}^i2^{-jq}\norm{\sum_k\alpha_kT_{m_j}x_{n_k,j}}^q\right)^{1/q}\\\\\leq\left(\sum_{j=1}^i2^{-jq}\left(C_j+\frac{\epsilon 2^j}{i^{1/q}}\right)^q\norm{(\alpha_k)}_{\ell_p}^q\right)^{1/q}=\norm{\left(2^{-j}C_j+\frac{\epsilon}{i^{1/q}}\right)_{j=1}^i}_{\ell_q}\norm{(\alpha_k)}_{\ell_p}\\\\\leq\left(\norm{\left(2^{-j}C_j\right)_{j=1}^i}_{\ell_q}+\norm{\left(\frac{\epsilon}{i^{1/q}}\right)_{j=1}^i}_{\ell_q}\right)\norm{(\alpha_k)}_{\ell_p}=(M_i+\epsilon)\norm{(\alpha_k)}_{\ell_p},\end{multline*}

\noindent which proves the claim that $S_i\in\mathcal{WD}_{\ell_p}^{(M_i,\xi)}(X,Y)\subseteq\mathcal{WD}_{\ell_p}^{(\infty,\xi)}(X,Y)$.

However, it cannot be that $S\in\mathcal{WD}_{\ell_p}^{(C,\xi)}(X,Y)$ for any $0\leq C<\infty$.  To show why not, fix $i\in\mathbb{N}$, and let $(x_n)$ be a normalized weakly null sequence in $X_i$.  Then let $\epsilon>0$ be such that, for any subsequence $(n_k)$, there exists $(\alpha_k)\in\mathcal{A}_\xi$ with $\norm{\sum_k\alpha_kT_{m_i}x_{n_k}}>(i2^i+\epsilon)\norm{(\alpha_k)}_{\ell_p}$.  Let $Q_i:X_i\to X$ be the canonical embedding of $X_i$ into $X$, and observe that $(Q_ix_n)_n$ is a normalized weakly null sequence in $X$.  However, for every subsequence $(n_k)$ there exists $(\alpha_k)\in\mathcal{A}_\xi$ with


\begin{multline*}\norm{\sum_k\alpha_kSQ_ix_{n_k}}=2^{-i}\norm{\sum_k\alpha_kT_{m_i}x_{n_k}}\\>2^{-i}(i2^i+\epsilon)\norm{(\alpha_k)}_{\ell_p}\geq(i+2^{-i}\epsilon)\norm{(\alpha_k)}_{\ell_p}.\end{multline*}

\noindent It follows that $S\notin\mathcal{WD}_{\ell_p}^{(i,\xi)}(X,Y)$ for any $i$, and hence $S\notin\mathcal{WD}_{\ell_p}^{(\infty,\xi)}(X,Y)$.\end{proof}


\begin{example}\label{not-norm-closed-example}Fix a constant $1<p\leq\infty$ and an ordinal $1\leq\xi\leq\omega_1$.  There exists a Banach space $X$ for which $\mathcal{WD}_{\ell_p}^{(\infty,\xi)}(X)$ is not norm-closed.  If $p\neq\infty$, then we can choose $X$ to be reflexive.\end{example}

\begin{proof}For convenience in writing, let us consider the case where $p\neq\infty$.  The case where $p=\infty$ uses $c_0$ in place of $\ell_p$, and the resulting proof is nearly identical, except that the resulting space $X$ is not reflexive.

Let $(e_n)$ denote the unit vector basis of $\ell_p$.  For each finite $E\subset\mathbb{N}$, define the functional $f_E\in\ell_p^*$ by the rule $f_E(e_n)=1$ if $n\in E$ and $f_E(e_n)=0$ otherwise.  Now, fix $m\in\mathbb{N}$, and define the norming set $\mathcal{B}_m:=B_{\ell_p^*}\cup\{f_E:E\subset\mathbb{N},\#E=m\}$, where $B_{\ell_p^*}$ denotes the closed unit ball of $\ell_p^*=\ell_q$.  Notice that for every $E\subset\mathbb{N}$ of size $m$, we have $|f_E(\sum\alpha_ke_k)|\leq\norm{(\alpha_k)_{k\in E}}_{\ell_1}\leq m^{1-\frac{1}{p}}\norm{(\alpha_k)}_{\ell_p}$ so that $\norm{f_E}_{\ell_p^*}\leq m^{1-\frac{1}{p}}$.  So $\mathcal{B}_m$ is a bounded subset of $\ell_p^*$ containing $B_{\ell_p^*}$.  Due to the identity $\norm{x}_{\ell_p}=\sup_{f\in B_{\ell_p^*}}|f(x)|$, we can now define an equivalent norm $\norm{\cdot}_m$ on $\ell_p$ by the rule $\norm{x}_m:=\sup_{f\in\mathcal{B}_m}|f(x)|$.  Put $X_m:=(\ell_p,\norm{\cdot}_m)$, and notice that for all $n$ and $E$, we have $|f_E(e_n)|\leq 1$.  Hence $(e_n)$ is still normalized in $X_m$, and weakly null since $X_m$ is isomorphic to $\ell_p$.  Furthermore, this isomorphism also means the identity map $I_m\in\mathcal{L}(X_m)$ is a norm-1 operator which lies in $\mathcal{WD}_{\ell_p}^{(\infty,\xi)}(X_m)$ by Example \ref{whole-space}.  However, we will show that $ C_{(p,\xi)}(I_m)\geq m^{1-\frac{1}{p}}$.

Suppose $C<m^{1-\frac{1}{p}}$, and let $0<\epsilon<m^{1-\frac{1}{p}}-C$.  Then, let $(e_{n_k})$ be any subsequence of $(e_n)$, which we have previously observed is normalized and weakly null in $X_m$.  Pick $E=(m+1<m+2<\cdots<2m)\in\mathcal{S}_1\subseteq\mathcal{S}_\xi$ of size $m$, and define $F:=(n_{m+1}<n_{m+2}<\cdots<n_{2m})$.  Since $\mathcal{S}_\xi$ is spreading, we have $F\in\mathcal{S}_\xi$, and also of size $m$.  Next, define $(\alpha_k)\in\mathcal{A}_\xi$ by letting $\alpha_k=1$ for all $k\in E$ and $\alpha_k=0$ otherwise.  Then

\begin{multline*}\norm{\sum\alpha_ke_{n_k}}_m\geq\left|f_F\left(\sum\alpha_ke_{n_k}\right)\right|=\left|f_F\left(\sum_{n\in F}e_n\right)\right|=m\\\\=m^{1-\frac{1}{p}}\norm{(\alpha_k)}_{\ell_p}>(C+\epsilon)\norm{(\alpha_k)}_{\ell_p}.\end{multline*}

\noindent Thus, the identity map $I_m$ does not lie in $\mathcal{WD}_{\ell_p}^{(C,\xi)}(X_m)$, and $ C_{(p,\xi)}(I_m)\geq m^{1-\frac{1}{p}}$ as claimed.

We have therefore constructed a sequence $(X_m)$ of Banach spaces and a corresponding sequence $I_m\in\mathcal{WD}_{\ell_p}^{(\infty,\xi)}(X_m)$ of norm-1 operators with $C_{(p,\infty)}(I_m)\to\infty$.  By Lemma \ref{nonclosed-space}, there exists a space $X$ for which $\mathcal{WD}_{\ell_p}^{(\infty,\xi)}(X)$ fails to be norm-closed, and in case $p\neq\infty$, we can choose it to be reflexive.\end{proof}

Even though $\mathcal{WD}_{\ell_p}^{(\infty,\xi)}$ is not a norm-closed operator ideal, when $\xi=1$ its components are $F_\sigma$-subsets of $\mathcal{L}(X,Y)$, as the following Proposition shows.

\begin{proposition}\label{norm-closed}Let $X$ and $Y$ be Banach spaces, and fix constants $0\leq C<\infty$ and $1\leq p\leq\infty$.  We consider the case $\xi=1$.  Then $\mathcal{WD}_{\ell_p}^{(C,1)}(X,Y)$ is a norm-closed subset of $\mathcal{L}(X,Y)$.\end{proposition}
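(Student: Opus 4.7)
My plan is to prove that a norm-limit $T$ of operators $T_n \in \mathcal{WD}_{\ell_p}^{(C,1)}(X,Y)$ again lies in $\mathcal{WD}_{\ell_p}^{(C,1)}(X,Y)$. Fix $\epsilon > 0$, a normalized weakly null sequence $(x_i) \subseteq X$, and let $q$ denote the conjugate exponent of $p$. The feature of $\mathcal{S}_1$ I will exploit is the inequality $\#F \leq \min F$ for every nonempty $F \in \mathcal{S}_1$, which by H\"older yields $\norm{(\alpha_k)}_{\ell_1} \leq (\min \text{supp}(\alpha_k))^{1/q} \norm{(\alpha_k)}_{\ell_p}$ for every $(\alpha_k) \in \mathcal{A}_1$. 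A single approximant $T_n$ cannot work, because $\norm{T - T_n} \cdot \norm{(\alpha_k)}_{\ell_1}$ grows with the support size; a diagonal construction that plays the approximation rate off against this H\"older gap is needed.

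For each $j \in \mathbb{N}$, I choose $n_j$ with $\norm{T - T_{n_j}} \leq \epsilon/(2 j^{1/q})$, which is possible by norm convergence. I then construct a nested sequence of subsequences $A_0 = (x_i) \supseteq A_1 = (a_k^{(1)}) \supseteq A_2 = (a_k^{(2)}) \supseteq \cdots$ by successively applying the $\mathcal{WD}_{\ell_p}^{(C,1)}$-property of $T_{n_j}$, with tolerance $\epsilon/2$, to $A_{j-1}$, so that $\norm{\sum_k \gamma_k T_{n_j} a_k^{(j)}} \leq (C + \epsilon/2) \norm{(\gamma_k)}_{\ell_p}$ for every $(\gamma_k) \in \mathcal{A}_1$. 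Finally, I extract the diagonal $y_k := a_k^{(k)}$; tracking indices shows that $(y_k)_{k \geq k_1}$ is a subsequence of $(a_k^{(k_1)})_{k \geq k_1}$ for each $k_1$, which is precisely the hypothesis of Lemma \ref{subsequence-hereditary}.

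To verify the defining estimate, I fix $(\alpha_k) \in \mathcal{A}_1$, set $k_1 := \min \text{supp}(\alpha_k)$, and use Lemma \ref{subsequence-hereditary} to rewrite $\sum_k \alpha_k T y_k = \sum_k \beta_k T a_k^{(k_1)}$ for some $(\beta_k) \in \mathcal{A}_1$ with the same $\ell_p$-norm and the same support size as $(\alpha_k)$. The triangle inequality splits this into an approximation error, bounded by $\norm{T - T_{n_{k_1}}} \cdot \norm{(\beta_k)}_{\ell_1} \leq (\epsilon/(2 k_1^{1/q})) \cdot k_1^{1/q} \norm{(\alpha_k)}_{\ell_p} = (\epsilon/2) \norm{(\alpha_k)}_{\ell_p}$ (using H\"older together with $\#\text{supp}(\beta_k) \leq k_1$), and a main term bounded by $(C + \epsilon/2) \norm{(\alpha_k)}_{\ell_p}$ from the defining property of $A_{k_1}$. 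Summing gives the target bound $(C + \epsilon) \norm{(\alpha_k)}_{\ell_p}$. The main obstacle is the calibration: the rate $\norm{T - T_{n_j}} \leq \epsilon/(2 j^{1/q})$ must exactly offset the H\"older blow-up, and this is made possible only by the specifically linear bound $\#\text{supp}(\alpha) \leq \min \text{supp}(\alpha)$ for $(\alpha) \in \mathcal{A}_1$, which fails for higher Schreier families $\mathcal{S}_\xi$ with $\xi \geq 2$, so the diagonal argument is genuinely tied to the case $\xi = 1$.
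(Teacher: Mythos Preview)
Your proof is correct and follows essentially the same approach as the paper's: both use a diagonal extraction against the approximants $T_j$ with the calibrated rate $\norm{T-T_j}<\epsilon/(2j^{1/q})=\epsilon/(2j^{1-1/p})$, then invoke Lemma~\ref{subsequence-hereditary} at level $m=\min\text{supp}(\alpha_k)$ so that the H\"older factor $m^{1/q}$ arising from $\#\text{supp}(\alpha_k)\le m$ exactly cancels the approximation rate. The only cosmetic differences are that the paper relabels the $T_j$'s rather than passing to a subsequence $(T_{n_j})$, and bounds $\norm{\sum\alpha_k x_{n_k}}$ directly rather than first rewriting via $(\beta_k)$; your added remark explaining why the argument is tied to $\xi=1$ is a welcome clarification.
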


\begin{proof}Let $(T_j)$ be a sequence in $\mathcal{WD}_{\ell_p}^{(C,1)}(X,Y)$ converging to some $T\in\mathcal{L}(X,Y)$.  Fix any $\epsilon>0$, and let $(x_n)\subset X$ be a normalized weakly null sequence in $X$.  Without loss of generality we may assume $\norm{T-T_j}<\epsilon/(2j^{1-\frac{1}{p}})$ for all $j$.  Let $(x_{n_k})$ be a subsequence formed by a diagonal argument using the $T_j$'s with $\frac{\epsilon}{2}>0$.  In other words, begin with a subsequence $(x_{n_{1,k}})$ given by the definition of $T_1\in\mathcal{WD}_{\ell_p}^{(C,1)}(X,Y)$, corresponding to $\frac{\epsilon}{2}>0$.  Then find a \emph{further} subsequence $(x_{n_{2,k}})\subset(x_{n_{1,k}})$ given by the definition of $T_2\in\mathcal{WD}_{\ell_p}^{(C,\xi)}(X,Y)$, also corresponding to $\frac{\epsilon}{2}>0$, and so on.  Finally, for each $k$, put $n_k:=n_{k,k}$.

Let $(\alpha_k)\in\mathcal{A}_1$, and set $m:=\min\text{supp}(\alpha_k)\leq\#\text{supp}(\alpha_k)$.  Notice that $(x_{n_k})_{k\geq m}$ is a subsequence of $(x_{n_{m,i}})_{i\geq m}$ so that by Lemma \ref{subsequence-hereditary}, 

\begin{multline*}\norm{\sum\alpha_kTx_{n_k}}\leq\norm{\sum_k\alpha_kT_mx_{n_k}}+\norm{T-T_m}\norm{\sum\alpha_kx_{n_k}}
\\\\<\left(C+\frac{\epsilon}{2}\right)\norm{(\alpha_k)}_{\ell_p}+m^{1-\frac{1}{p}}\left(\frac{\epsilon}{2m^{1-\frac{1}{p}}}\right)\norm{(\alpha_k)}_{\ell_p}=(C+\epsilon)\norm{(\alpha_k)}_{\ell_p}\end{multline*}\end{proof}

\begin{definition}Fix Banach spaces $X$ and $Y$, along with a constant $1\leq p\leq\infty$ and an ordinal $1\leq\xi\leq\omega_1$.  We define the norm $\norm{\cdot}_{(p,\xi)}$ on the space $\mathcal{WD}_{\ell_p}^{(\infty,\xi)}(X,Y)$ by the rule $\norm{T}_{(p,\xi)}:=\norm{T}_{\mathcal{L}(X,Y)}+ C_{(p,\xi)}(T)$.\end{definition}

Notice that $\norm{\cdot}_{(p,\xi)}$ is indeed a norm on $\mathcal{WD}_{\ell_p}^{(\infty,\xi)}(X,Y)$, as it is the sum of a norm and a seminorm.  



\begin{proposition}\label{banach-ideal}Fix $1\leq p\leq\infty$.  In case $\xi=1$, the rule $\norm{\cdot}_{(p,1)}$ is an ideal norm which makes $\mathcal{WD}_{\ell_p}^{(\infty,1)}$ into a Banach ideal.\end{proposition}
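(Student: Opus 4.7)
The plan is to separately verify the three axioms of an ideal norm and then to establish completeness, with the latter being the main substantive step.

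For the ideal norm axioms, I would start by noting that a rank-one operator $x^*\otimes y$ is completely continuous (it factors through the scalars), so by Proposition \ref{VinWD} we have $C_{(p,1)}(x^*\otimes y)=0$, whence $\norm{x^*\otimes y}_{(p,1)}=\norm{x^*\otimes y}_{\mathcal{L}}=\norm{x^*}\norm{y}$. The triangle inequality is immediate: $\norm{\cdot}_{\mathcal{L}}$ is a norm and $C_{(p,1)}$ is a seminorm by Proposition \ref{seminorm}, so their sum satisfies subadditivity. For the ideal property, given $T\in\mathcal{WD}_{\ell_p}^{(\infty,1)}(X,Y)$, $A\in\mathcal{L}(W,X)$, and $B\in\mathcal{L}(Y,Z)$, Proposition \ref{multideal} (applied twice) gives $BTA\in\mathcal{WD}_{\ell_p}^{(C\norm{A}\norm{B},1)}(W,Z)$ whenever $T\in\mathcal{WD}_{\ell_p}^{(C,1)}(X,Y)$; infimizing over $C$ yields $C_{(p,1)}(BTA)\leq\norm{B}\,C_{(p,1)}(T)\,\norm{A}$, and combining with the operator norm estimate $\norm{BTA}_{\mathcal{L}}\leq\norm{B}\norm{T}_{\mathcal{L}}\norm{A}$ produces $\norm{BTA}_{(p,1)}\leq\norm{B}\norm{T}_{(p,1)}\norm{A}$.

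The main work is completeness. Let $(T_n)$ be a $\norm{\cdot}_{(p,1)}$-Cauchy sequence in $\mathcal{WD}_{\ell_p}^{(\infty,1)}(X,Y)$. Since $\norm{\cdot}_{\mathcal{L}}\leq\norm{\cdot}_{(p,1)}$, the sequence converges in operator norm to some $T\in\mathcal{L}(X,Y)$. Being Cauchy, the values $C_{(p,1)}(T_n)$ are bounded, say by some $M$; then Proposition \ref{seminorm} places every $T_n$ in $\mathcal{WD}_{\ell_p}^{(M,1)}(X,Y)$. Now Proposition \ref{norm-closed} asserts that $\mathcal{WD}_{\ell_p}^{(M,1)}(X,Y)$ is norm-closed in $\mathcal{L}(X,Y)$, hence $T\in\mathcal{WD}_{\ell_p}^{(M,1)}(X,Y)\subseteq\mathcal{WD}_{\ell_p}^{(\infty,1)}(X,Y)$.

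It remains to show $\norm{T_n-T}_{(p,1)}\to 0$. Given $\epsilon>0$, choose $N$ so that $\norm{T_n-T_m}_{(p,1)}<\epsilon$ for all $n,m\geq N$. For each fixed $n\geq N$, the sequence $(T_n-T_m)_{m\geq N}$ lies in $\mathcal{WD}_{\ell_p}^{(\epsilon,1)}(X,Y)$ by Proposition \ref{seminorm} and converges in operator norm to $T_n-T$; applying Proposition \ref{norm-closed} again yields $T_n-T\in\mathcal{WD}_{\ell_p}^{(\epsilon,1)}(X,Y)$, so $C_{(p,1)}(T_n-T)\leq\epsilon$. Combined with $\norm{T_n-T}_{\mathcal{L}}\to 0$, this gives $\limsup_n\norm{T_n-T}_{(p,1)}\leq\epsilon$, and since $\epsilon>0$ was arbitrary, $T_n\to T$ in the $\norm{\cdot}_{(p,1)}$ norm.

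The entire argument pivots on Proposition \ref{norm-closed}, which is precisely the place where the hypothesis $\xi=1$ is essential; for higher $\xi$ the diagonal construction in that proposition would break down, and with it the completeness argument above. Apart from this, the proof is essentially bookkeeping: one decomposes the $\norm{\cdot}_{(p,1)}$-Cauchy condition into its two pieces and handles each with the appropriate earlier result.
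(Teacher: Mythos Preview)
Your proof is correct and follows the same approach as the paper: verify the ideal-norm axioms via Propositions \ref{VinWD}, \ref{multideal}, and \ref{seminorm}, and deduce completeness from Proposition \ref{norm-closed}. In fact your argument is more thorough than the paper's, which shows only that the operator-norm limit $T$ lies in $\mathcal{WD}_{\ell_p}^{(\infty,1)}(X,Y)$ and does not explicitly verify $\norm{T_n-T}_{(p,1)}\to 0$; your second application of Proposition \ref{norm-closed} to the differences $T_n-T_m$ fills this gap cleanly.
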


\begin{proof}  As was observed earlier, that $\mathcal{WD}_{\ell_p}^{(\infty,1)}$ is an operator ideal follows from Propositions \ref{VinWD}, \ref{multideal}, and \ref{addition}.  To show that $\norm{\cdot}_{(p,1)}$ induces a complete norm on each component space $\mathcal{WD}_{\ell_p}^{(\infty,1)}(X,Y)$, suppose $(T_n)$ is a $\norm{\cdot}_{(p,1)}$-Cauchy sequence.  Then it is $\norm{\cdot}_{(p,1)}$-bounded and hence $C_{(p,1)}$-bounded, say by $M>0$.  It is also Cauchy in the operator norm so that $T_n\to T$ for some $T\in\mathcal{L}(X,Y)$.  By Proposition \ref{seminorm}, every $T_n$ lies in the set $\mathcal{WD}_{\ell_p}^{(M,1)}(X,Y)$, which is closed under the operator norm by Proposition \ref{norm-closed}.  Hence, $T\in\mathcal{WD}_{\ell_p}^{(\infty,1)}(X,Y)$ as well, and it remains only to show that $\norm{\cdot}_{(p,1)}$ is indeed an ideal norm.

Since any element of the form $x^*\otimes y$ is rank-1, it is completely continuous.  By Proposition \ref{VinWD}, this means $C_{(p,1)}(x^*\otimes y)=0$ and hence $\norm{x^*\otimes y}_{(p,1)}=\norm{x^*\otimes y}_{\mathcal{L}(X,Y)}=\sup_{x\in S_X}\norm{(x^*\otimes y)(x)}_Y=\sup_{x\in S_X}|x^*(x)|\norm{y}_Y=\norm{x^*}_{X^*}\norm{y}_Y$.  The triangle inequality follows from the fact that $\norm{\cdot}_{(p,1)}$ is a norm on each component space $\mathcal{WD}_{\ell_p}^{(\infty,1)}(X,Y)$.  That $\norm{BTA}_{(p,1)}\leq\norm{B}_{\mathcal{L}(Y,Z)}\norm{T}_{(p,1)}\norm{A}_{\mathcal{L}(W,X)}$ for all $T\in\mathcal{J}(X,Y)$, $A\in\mathcal{L}(W,X)$, and $B\in\mathcal{L}(Y,Z)$, follows naturally from Propositions \ref{multideal} and \ref{seminorm}.\end{proof}

\section{Significance of parameters}


Let $1<q<p<\infty$.  By Example \ref{compact} we get $\mathcal{WD}_{\ell_p}^{(\infty,\xi)}(\ell_q)=\mathcal{K}(\ell_q)$, whereas by Example \ref{whole-space} we get $\mathcal{WD}_{\ell_q}^{(\infty,\xi)}(\ell_q)=\mathcal{L}(\ell_q)$.  Applying Proposition \ref{inclusion} therefore gives us the following.

\begin{proposition}\label{p-distinct}Fix any ordinal $1\leq\xi\leq\omega_1$.  For $1\leq q<p\leq\infty$, the norm-closed operator ideals $\overline{\mathcal{WD}}_{\ell_p}^{(\infty,\xi)}$ and $\overline{\mathcal{WD}}_{\ell_q}^{(\infty,\xi)}$ are distinct (as classes).\end{proposition}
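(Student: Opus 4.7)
The plan is to exhibit, for each pair $1 \leq q < p \leq \infty$, a single Banach space $X$ whose components $\overline{\mathcal{WD}}_{\ell_p}^{(\infty,\xi)}(X)$ and $\overline{\mathcal{WD}}_{\ell_q}^{(\infty,\xi)}(X)$ are already different. The prose preceding the statement sketches this for the core range $1 < q < p < \infty$ with $X = \ell_q$; the job is to choose $X$ uniformly so that the boundary cases $q = 1$ and $p = \infty$ are handled as well.

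First I would pick an auxiliary exponent $r$ satisfying $\max(q,1) \leq r < p$ with $1 < r < \infty$. Concretely, set $r := q$ when $q > 1$, and otherwise pick any $r \in (1, p)$, which exists since $p > 1$. In either case $\ell_r$ is reflexive (so in particular it fails to contain a copy of $\ell_1$), $r$ is finite, and $q \leq r < p$. Take $X := \ell_r$.

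Next I would apply Example \ref{compact} with both domain and codomain equal to $\ell_r$ and with exponents $r < p$, yielding $\mathcal{WD}_{\ell_p}^{(\infty,\xi)}(\ell_r) = \mathcal{K}(\ell_r)$. Because the compact operators already form a norm-closed class, the closure adds nothing, and $\overline{\mathcal{WD}}_{\ell_p}^{(\infty,\xi)}(\ell_r) = \mathcal{K}(\ell_r)$. On the other side, I would apply Example \ref{whole-space} with the trivial isometric embedding of $\ell_r$ into $\ell_r$ (so $K = 1$) and exponents $1 \leq q \leq r < \infty$, obtaining $\mathcal{WD}_{\ell_q}^{(\infty,\xi)}(\ell_r) = \mathcal{L}(\ell_r)$ and hence $\overline{\mathcal{WD}}_{\ell_q}^{(\infty,\xi)}(\ell_r) = \mathcal{L}(\ell_r)$. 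Since $I_{\ell_r}$ is bounded but not compact, $\mathcal{K}(\ell_r) \neq \mathcal{L}(\ell_r)$, so the two closed classes differ at $\ell_r$ and are therefore distinct as classes.

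The only real obstacle is arranging $r$ so that both Examples \ref{compact} and \ref{whole-space} apply simultaneously: Example \ref{compact} demands that the domain avoid $\ell_1$ (forcing $r > 1$) and that $r < p$, while Example \ref{whole-space} in its $\ell_r$-embedding form requires $r < \infty$ and $q \leq r$. The choice above threads this needle uniformly across the full parameter range $1 \leq q < p \leq \infty$.
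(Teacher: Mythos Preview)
Your proposal is correct and follows essentially the same approach as the paper: exhibit an $\ell_r$ on which Example~\ref{compact} forces the $p$-class down to the compacts while Example~\ref{whole-space} makes the $q$-class all of $\mathcal{L}(\ell_r)$. The only cosmetic difference is that the paper first treats the core range $1<q<p<\infty$ with $r=q$ and then invokes Proposition~\ref{inclusion} to reach the boundary exponents, whereas you select $r$ up front so that both Examples apply directly across the full range $1\le q<p\le\infty$, bypassing the need for Proposition~\ref{inclusion}.
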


\noindent However, it is natural to also ask whether the classes $\mathcal{WD}_{\ell_p}^{(\infty,\xi)}$ are distinct as $\xi$ ranges over $1\leq\xi\leq\omega_1$.  Obviously, this is not the case for the trivial ideal $\mathcal{WD}_{\ell_1}^{(\infty,\xi)}=\mathcal{L}$.  The question remains open in general for $1<p\leq\infty$, but in this section we do give a {\it partial} answer by showing that $\mathcal{WD}_{\ell_p}^{(\infty,\xi)}\neq\mathcal{WD}_{\ell_p}^{(\infty,\omega_1)}$ for all countable ordinals $1\leq\xi<\omega_1$.  Our task requires a few preliminaries, given below.

\begin{proposition}\label{ordinal-inclusion}Let $X$ and $Y$ be Banach spaces, and fix a constant $1\leq p\leq\infty$.  Let $1\leq\xi<\zeta\leq\omega_1$ be ordinals, and $0\leq C\leq\infty$.  Then $\mathcal{WD}_{\ell_p}^{(C,\zeta)}(X,Y)\subseteq\mathcal{WD}_{\ell_p}^{(C,\xi)}(X,Y)$.\end{proposition}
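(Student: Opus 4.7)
The case $C = \infty$ reduces to $0 \leq C < \infty$ via the definition $\mathcal{WD}_{\ell_p}^{(\infty,\cdot)} = \bigcup_{C\geq 0} \mathcal{WD}_{\ell_p}^{(C,\cdot)}$, so I would assume $C$ is finite throughout. The key ingredient is the fact recalled in the preliminaries: since $\xi < \zeta$, there exists $d = d(\xi,\zeta) \in \mathbb{N}$ such that every $F \in \mathcal{S}_\xi$ with $\min F \geq d$ already lies in $\mathcal{S}_\zeta$. Combined with the spreading property of $\mathcal{S}_\xi$, this lets me transfer a $\zeta$-estimate into a $\xi$-estimate by shifting every coefficient support up by $d-1$.

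Concretely, let $T \in \mathcal{WD}_{\ell_p}^{(C,\zeta)}(X,Y)$, fix $\epsilon > 0$, and take a normalized weakly null sequence $(x_n) \subset X$. Applying the $\zeta$-definition produces a subsequence $(x_{n_k})$ for which $\norm{\sum_k \beta_k Tx_{n_k}} \leq (C+\epsilon)\norm{(\beta_k)}_{\ell_p}$ for every $(\beta_k) \in \mathcal{A}_\zeta$. I would then pass to the tail subsequence $x'_k := x_{n_{k+d-1}}$. Given any $(\alpha_k) \in \mathcal{A}_\xi$, define $(\beta_j)$ by $\beta_j := \alpha_{j-d+1}$ for $j \geq d$ and $\beta_j := 0$ for $j < d$, so that $\text{supp}(\beta_j)$ is $\text{supp}(\alpha_k)$ shifted upward by $d-1$. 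The spreading property of $\mathcal{S}_\xi$ gives $\text{supp}(\beta_j) \in \mathcal{S}_\xi$, and since $\min \text{supp}(\beta_j) \geq d$ the fact about $d$ places $\text{supp}(\beta_j)$ in $\mathcal{S}_\zeta$; hence $(\beta_j) \in \mathcal{A}_\zeta$. Since $\norm{(\beta_j)}_{\ell_p} = \norm{(\alpha_k)}_{\ell_p}$ and $\sum_k \alpha_k Tx'_k = \sum_j \beta_j Tx_{n_j}$, the $\zeta$-estimate immediately yields $\norm{\sum_k \alpha_k Tx'_k} \leq (C+\epsilon)\norm{(\alpha_k)}_{\ell_p}$, witnessing $T \in \mathcal{WD}_{\ell_p}^{(C,\xi)}(X,Y)$.

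I do not expect a serious obstacle: the heart of the argument is the single observation that $\mathcal{A}_\xi$ is contained in $\mathcal{A}_\zeta$ after a uniform index shift, and all the bookkeeping steps (passing to a tail, re-indexing, preserving the $\ell_p$ norm under a push-out) already appear in various guises in the proofs of Proposition \ref{addition} and Lemma \ref{subsequence-hereditary}.
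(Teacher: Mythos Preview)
Your proposal is correct and matches the paper's proof essentially line for line: both pass to the subsequence from the $\zeta$-definition, shift the coefficient sequence so its support starts at or beyond $d=d(\xi,\zeta)$, invoke spreading plus the $d$-property to land in $\mathcal{A}_\zeta$, and read off the $\xi$-estimate on the corresponding tail subsequence. The only cosmetic difference is that you shift by $d-1$ (so $\min\text{supp}(\beta)\geq d$) whereas the paper shifts by $d$ (so $\min\text{supp}(\beta)\geq d+1$); either works.
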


\begin{proof}We may assume $C\neq\infty$.  Suppose $T\in\mathcal{WD}_{\ell_p}^{(C,\zeta)}(X,Y)$, and let $(x_n)$ be a normalized weakly null sequence in $X$, and $\epsilon>0$.  Then there exists a subsquence $(n_k)$ such that $\norm{\sum\alpha_kTx_{n_k}}\leq(C+\epsilon)\norm{(\alpha_k)}_{\ell_p}$ for all $(\alpha_k)\in\mathcal{A}_\zeta$.  Let $d=d(\xi,\zeta)$ be such that if $E\in\mathcal{S}_\xi$ with $\min E\geq d$ then $E\in\mathcal{S}_\zeta$.  Now, let $(\alpha_k)\in\mathcal{S}_\xi$, and define the scalar sequence $(\beta_k)$ as $\beta_k:=\alpha_{k-d}$ for $k\geq d$ and $\beta_k:=0$ for $k<d$.  By the spreading property of $\mathcal{S}_\xi$ we have $(\beta_k)\in\mathcal{A}_\xi$, and since also $\min\text{supp}(\beta_k)\geq d$ we have $(\beta_k)\in\mathcal{A}_\zeta$.  Thus, $\norm{\sum\alpha_kTx_{n_{k+d}}}=\norm{\sum\beta_kTx_{n_k}}\leq(C+\epsilon)\norm{(\beta_k)}_{\ell_p}=(C+\epsilon)\norm{(\alpha_k)}_{\ell_p}$.\end{proof}


Let $1\leq\xi<\omega_1$ be a countable ordinal.  A finite sequence $(E_i)_{i=1}^j$ of finite subsets of $\mathbb{N}$ is called {\bf $\boldsymbol{\mathcal{S}_\xi}$-admissible} whenever $E_1<\cdots<E_j$ and $\{\min E_i\}_{i=1}^j\in\mathcal{S}_\xi$.  Then the {\bf Tsirelson-type space} $T[\frac{1}{2},\mathcal{S}_\xi]$ is the completion of $c_{00}$ under the norm $\norm{\cdot}_T$ uniquely defined by the implicit equation $\norm{x}_T=\max\{\norm{x}_{\ell_\infty},\frac{1}{2}\sup\sum_i\norm{E_ix}_T\}$, where the ``sup'' is taken over all $j\in\mathbb{N}$ and all $\mathcal{S}_\xi$-admissible families $(E_i)_{i=1}^j$.  Here we use the notation $E_ix:=\sum_{n\in E_i}\alpha_ne_n$ for $x:=\sum\alpha_ne_n\in c_{00}$, where $(e_n)$ are the canonical basis vectors in $c_{00}$.  We also use the abbreviation $T=T[\frac{1}{2},\mathcal{S}_\xi]$ when the ordinal $\xi$ is understood from context.

It is easily seen that the canonical unit vectors in $c_{00}$ form a normalized 1-unconditional basis for $T$.  For $1\leq q<\infty$, its {\bf $\boldsymbol{q}$-convexification} $T_q[\frac{1}{2},\mathcal{S}_\xi]$, where we again use the abbreviation $T_q=T_q[\frac{1}{2},\mathcal{S}_\xi]$, is then usually defined in the literature by setting $T_q=T_q[\textstyle\frac{1}{2},\mathcal{S}_\xi]=\{(\alpha_n):(|\alpha_n|^q)\in T=T[\frac{1}{2},\mathcal{S}_\xi]\}$, which is a Banach space under the norm $\norm{(\alpha_n)}_{T_q}:=\norm{(|\alpha_n|^q)}_T^{1/q}$, and with the canonical unit vectors in $c_{00}$ again forming a normalized 1-unconditional basis.  (Notice also that if $q=1$ then we get $T_1=T$.)  However, it will serve our purposes much better to use instead the following equivalent construction (cf. \cite{JL03}, p1062).  We inductively define a sequence $(\norm{\cdot}_n)$ of norms on $c_{00}$.  Set $\norm{\cdot}_0:=\norm{\cdot}_{\ell_\infty}$ and define each successive $\norm{\cdot}_{n+1}$ by the rule $\norm{x}_{n+1}=\max\{\norm{x}_{\ell_\infty},2^{-1/q}\sup(\sum_i\norm{E_ix}_n^q)^{1/q}\}$, where the ``sup'' is taken over all $j\in\mathbb{N}$ and all $\mathcal{S}_\xi$-admissible families $(E_i)_{i=1}^j$.  Then $\norm{x}_{T_q}:=\lim_{n\to\infty}\norm{x}_n$ defines a norm on $c_{00}$.  In fact, it is easily seen (cf., e.g., the kind of argument used in the proof to Theorem 10.3.2 in \cite{AK06}) that $\norm{\cdot}_{T_q}$ is the unique norm on $c_{00}$ satisfying the implicit equation $\norm{x}_{T_q}=\max\{\norm{x}_{\ell_\infty},2^{-1/q}\sup(\sum_i\norm{E_ix}_{T_q}^q)^{1/q}\}$.  The space $T_q$ is just the completion of $c_{00}$ under this norm.

Due to this construction, $\norm{x}_{T_q}\leq\norm{x}_{\ell_q}$ for each $x\in c_{00}$.  Furthermore, $T_q$ is known to be a reflexive Banach space which contains no copy of $\ell_q$.  When $q=1$ this follows from Proposition 5.1 in \cite{AA92}.  In case $1<q<\infty$, Remark T.1 on p1062 of \cite{JL03} tells us that $T_q$ is an asymptotic $\ell_q$ space which contains no copy of $\ell_q$, and thus by Remark 6.3 in \cite{MT93} it is also reflexive.  Therefore each dual space $T_q^*$ is a reflexive space which fails to contain any copy of $\ell_p$, $\frac{1}{p}+\frac{1}{q}=1$.  Notice that $T_q^*$ can also be viewed as a completion of $c_{00}$ under some norm $\norm{\cdot}_{T_q^*}$, with the usual action $f(x)=\sum\alpha_n\beta_n$ for $f=(\alpha_n)\in T_q^*$ and $x=(\beta_n)\in T_q$.


In \cite{OA13} was given an implicit formula for the norm of $T_1[\frac{1}{2},\mathcal{S}_1]^*$.  It is natural to conjecture that a similar formula always holds for the norm of $T_q[\frac{1}{2},\mathcal{S}_\xi]^*$, but for our purposes we only need a crude estimate.

\begin{lemma}\label{convexified-estimate}Let $1<p\leq\infty$ and $1\leq q<\infty$ be conjugate, i.e. $\frac{1}{p}+\frac{1}{q}=1$.  Set $1\leq\xi<\omega_1$ and $T_q=T_q[\frac{1}{2},\mathcal{S}_\xi]$.  Then $\norm{x}_{T_q^*}\leq 2^{1/q}\norm{(\norm{E_ix}_{T_q^*})}_{\ell_p}$ for all $x\in c_{00}$ and $\mathcal{S}_\xi$-admissible families $(E_i)_{i=1}^j$ satisfying $x=\sum_{i=1}^jE_ix$.\end{lemma}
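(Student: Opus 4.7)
The plan is to prove this via straightforward duality, Hölder's inequality, and the implicit recursive definition of $\norm{\cdot}_{T_q}$. The inequality looks natural once one realizes that a $\mathcal{S}_\xi$-admissible decomposition of $x \in T_q^*$ corresponds dually to the ``lower $\ell_q$ estimate'' built into the norm of $T_q$.

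First I would write $\norm{x}_{T_q^*} = \sup_{\norm{y}_{T_q}\leq 1}|x(y)|$ and fix a witnessing $y \in c_{00}$ (or more generally $y \in T_q$ in the unit ball), pairing with $x$ via $x(y) = \sum \alpha_n \beta_n$ where $x = (\alpha_n)$ and $y = (\beta_n)$. Since each $E_i x$ is supported in $E_i$, I have $(E_i x)(y) = (E_i x)(E_i y)$, and the hypothesis $x = \sum_{i=1}^j E_i x$ lets me split
\[ |x(y)| = \Bigl|\sum_{i=1}^j (E_i x)(E_i y)\Bigr| \leq \sum_{i=1}^j \norm{E_i x}_{T_q^*}\,\norm{E_i y}_{T_q}. \]

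Next I would apply Hölder's inequality with the conjugate pair $(p,q)$ to the right-hand side, obtaining
\[ |x(y)| \leq \norm{\bigl(\norm{E_i x}_{T_q^*}\bigr)_{i=1}^j}_{\ell_p} \cdot \Bigl(\sum_{i=1}^j \norm{E_i y}_{T_q}^q\Bigr)^{1/q}. \]
Now I invoke the implicit equation for $\norm{\cdot}_{T_q}$: since $(E_i)_{i=1}^j$ is $\mathcal{S}_\xi$-admissible, the defining recursion gives $\bigl(\sum_i \norm{E_i y}_{T_q}^q\bigr)^{1/q} \leq 2^{1/q}\norm{y}_{T_q} \leq 2^{1/q}$. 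Combining this with the Hölder estimate and taking the supremum over $y$ in the unit ball of $T_q$ yields the desired bound.

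There is no real obstacle, since both the dual pairing and Hölder duality align naturally with the recursive structure of $T_q$; the only minor subtlety is justifying $\bigl(\sum_i \norm{E_i y}_{T_q}^q\bigr)^{1/q} \leq 2^{1/q}\norm{y}_{T_q}$ directly from the implicit equation, but this is immediate because that sum is one of the terms being supremized on the right-hand side of the equation $\norm{y}_{T_q} = \max\{\norm{y}_{\ell_\infty}, 2^{-1/q}\sup(\sum_i \norm{E_i y}_{T_q}^q)^{1/q}\}$ taken over admissible families.
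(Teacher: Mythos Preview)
Your proposal is correct and follows essentially the same route as the paper's proof: split $x(y)=\sum_i(E_ix)(E_iy)$, apply the duality estimate and H\"older, and then use the implicit equation for $\norm{\cdot}_{T_q}$ to bound $(\sum_i\norm{E_iy}_{T_q}^q)^{1/q}$ by $2^{1/q}\norm{y}_{T_q}$. The only cosmetic difference is that you phrase the last step as a supremum over $\norm{y}_{T_q}\leq 1$, whereas the paper leaves the inequality in the form $|x(y)|\leq 2^{1/q}\norm{(\norm{E_ix}_{T_q^*})}_{\ell_p}\norm{y}_{T_q}$ for arbitrary $y$.
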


\begin{proof}Let $y\in T_q$.  Since $x=\sum_{i=1}^jE_ix$ we have $x(y)=\sum_{i=1}^j(E_ix)(E_iy)$.  Then by this fact together with H\"older and the relation $2^{-1/q}(\sum_{i=1}^j\norm{E_iy}_{T_q}^q)^{1/q}\leq\norm{y}_{T_q}$ (which follows from the construction of $T_q$), we have

\begin{multline*}|x(y)|=|\sum_{i=1}^j(E_ix)(E_iy)|\leq\sum_{i=1}^j|(E_ix)(E_iy)|\leq\sum_{i=1}^j\norm{E_ix}_{T_q^*}\norm{E_iy}_{T_q}\\\\\leq\norm{(\norm{E_ix}_{T_q^*})}_{\ell_p}\left(\sum_{i=1}^j\norm{E_iy}_{T_q}^q\right)^{1/q}\leq 2^{1/q}\norm{(\norm{E_ix}_{T_q^*})}_{\ell_p}\norm{y}_{T_q}.\end{multline*}\end{proof}


\begin{lemma}\label{Tsirelson-block-estimate}Let $T_q=T_q[\frac{1}{2},\mathcal{S}_\xi]$, $1\leq q<\infty$ and $1\leq\xi<\omega_1$, and let $(u_k)$ be any normalized block basic sequence in the dual space $T_q^*$ (with respect to the canonical unit vectors in $c_{00}$).  Then for every $(\alpha_k)\in\mathcal{A}_\xi$ we have $\norm{\sum\alpha_ku_k}_{T_q^*}\leq 2^{1/q}\norm{(\alpha_k)}_{\ell_p}$, where $1<p\leq\infty$ is conjugate to $q$, that is, $\frac{1}{p}+\frac{1}{q}=1$.\end{lemma}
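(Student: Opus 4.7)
The plan is to reduce the statement directly to Lemma \ref{convexified-estimate} by choosing the right $\mathcal{S}_\xi$-admissible decomposition of $x := \sum \alpha_k u_k$. The key observation is that since $(u_k)$ is a block basis with respect to the canonical unit vectors, the supports $F_k := \mathrm{supp}(u_k)$ satisfy $F_1 < F_2 < \cdots$ and in particular $\min F_k \geq k$ for every $k$. This inequality is exactly what will let me transport the Schreier-admissibility of the support of $(\alpha_k)$ over to the supports of the $u_k$'s via the spreading property.

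So first I would enumerate $\mathrm{supp}(\alpha_k) = \{k_1 < k_2 < \cdots < k_j\}$, which lies in $\mathcal{S}_\xi$ by assumption. Then I would set $E_i := F_{k_i}$ for $i = 1, \ldots, j$, giving a block family $E_1 < E_2 < \cdots < E_j$. Since $\min E_i = \min F_{k_i} \geq k_i$ and $\{k_1, \ldots, k_j\} \in \mathcal{S}_\xi$, the spreading property of $\mathcal{S}_\xi$ forces $\{\min E_i\}_{i=1}^j \in \mathcal{S}_\xi$. Hence $(E_i)_{i=1}^j$ is $\mathcal{S}_\xi$-admissible. Moreover, because the $F_{k_i}$ are pairwise disjoint and cover $\mathrm{supp}(x)$, we have $E_i x = \alpha_{k_i} u_{k_i}$ and $x = \sum_{i=1}^j E_i x$, so the hypotheses of Lemma \ref{convexified-estimate} are fulfilled.

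Applying that lemma then yields
\[
\bignorm{\sum_k \alpha_k u_k}_{T_q^*} \leq 2^{1/q} \bignorm{(\norm{E_i x}_{T_q^*})_{i=1}^j}_{\ell_p} = 2^{1/q} \bignorm{(|\alpha_{k_i}|\,\norm{u_{k_i}}_{T_q^*})_{i=1}^j}_{\ell_p} = 2^{1/q}\norm{(\alpha_k)}_{\ell_p},
\]
using that each $u_{k_i}$ is normalized. The only substantive step is the admissibility verification; everything else is bookkeeping. The main obstacle I anticipate is purely conceptual: one has to remember that the $E_i$ in Lemma \ref{convexified-estimate} are chosen inside the dual space $T_q^*$ but the admissibility condition is still phrased in terms of the canonical basis indexing, so the block-basis structure of $(u_k)$ is exactly the right bridge. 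A minor edge case — when $(\alpha_k)$ has empty support — is trivial, and the case of empty support is handled by the convention $\emptyset \in \mathcal{S}_\xi$.
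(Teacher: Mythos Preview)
Your proposal is correct and follows essentially the same argument as the paper: you enumerate $\mathrm{supp}(\alpha_k)=\{k_1<\cdots<k_j\}$, set $E_i:=\mathrm{supp}(u_{k_i})$, use $\min E_i\geq k_i$ together with the spreading property of $\mathcal{S}_\xi$ to verify $\mathcal{S}_\xi$-admissibility, and then apply Lemma~\ref{convexified-estimate}. The paper's proof is virtually identical, differing only in notation.
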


\begin{proof}Write $\text{supp}(\alpha_k)=:\{k_1,\cdots,k_j\}\in\mathcal{S}_\xi$, and set $E_i:=\text{supp}\{u_{k_i}\}$ for each $1\leq i\leq j$.  Then $x:=\sum\alpha_ku_k=\sum_{i=1}^jE_ix$, where $\norm{E_ix}_{T_q^*}=\norm{\alpha_{k_i}u_{k_i}}_{T_q^*}=|\alpha_{k_i}|$ for each $1\leq i\leq j$.  Furthermore, due to $k_i\leq\min\text{supp}\{u_{k_i}\}=\min E_i$ together with $\{k_1,\cdots,k_j\}\in\mathcal{S}_\xi$ and the spreading property of Schreier families, we see that $(E_i)_{i=1}^j$ is $\mathcal{S}_\xi$-admissible.  All of this together with Lemma \ref{convexified-estimate} means $\norm{\sum\alpha_ku_k}_{T_q^*}\leq 2^{1/q}\norm{(\norm{E_ix}_{T_q^*})_{i=1}^j}_{\ell_p}=2^{1/q}\norm{(\alpha_{k_i})_{i=1}^j}_{\ell_p}=2^{1/q}\norm{(\alpha_k)}_{\ell_p}$.\end{proof}

\begin{lemma}\label{convexified-lower-estimate}Set $T_q=T_q[\frac{1}{2},\mathcal{S}_\xi]$, $1\leq q<\infty$ and $1\leq\xi<\omega_1$.  Let $1<p\leq\infty$ denote the conjugate of $q$, that is, $\frac{1}{p}+\frac{1}{q}=1$.  Then $\norm{x^*}_{\ell_p}\leq\norm{x^*}_{T_q^*}$ for all $x^*\in c_{00}$.\end{lemma}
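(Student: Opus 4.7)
The plan is to deduce the claim from the pointwise domination $\norm{y}_{T_q} \leq \norm{y}_{\ell_q}$ for all $y \in c_{00}$, which is noted explicitly in the paragraph preceding the lemma as a consequence of the construction of $T_q$ (every iterate $\norm{\cdot}_n$ in the inductive definition is dominated by $\norm{\cdot}_{\ell_q}$, and the bound passes to the limit). Since both $T_q^*$ and $\ell_p$ act on $c_{00}$ via the same canonical pairing $x^*(y) = \sum_n \alpha_n\beta_n$ where $x^* = (\alpha_n)$, $y = (\beta_n)$, which is stated just above the lemma, this setup lets us compare the two dual norms directly on the same class of test vectors.

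Concretely, I would begin by recalling the standard dual description $\norm{x^*}_{\ell_p} = \sup\{|x^*(y)| : y \in c_{00},\ \norm{y}_{\ell_q} \leq 1\}$ coming from $\ell_q^* = \ell_p$. Then for any such $y$, the noted domination yields $\norm{y}_{T_q} \leq \norm{y}_{\ell_q} \leq 1$, so that $y$ is also a legitimate test vector for $\norm{\cdot}_{T_q^*}$. Hence $|x^*(y)| \leq \norm{x^*}_{T_q^*}\norm{y}_{T_q} \leq \norm{x^*}_{T_q^*}$, and taking the supremum over all such $y$ gives $\norm{x^*}_{\ell_p} \leq \norm{x^*}_{T_q^*}$.

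There is essentially no obstacle: the argument is a one-line duality, "smaller primal norm implies larger dual norm", combined with a fact the paper has already established. The only minor point requiring care is ensuring that the supremum defining $\norm{x^*}_{T_q^*}$ really can be taken over $y \in c_{00}$ with $\norm{y}_{T_q}\leq 1$ rather than over the full unit ball of $T_q$, but this is immediate from the density of $c_{00}$ in $T_q$.
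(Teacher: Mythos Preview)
Your proposal is correct and follows essentially the same approach as the paper: both arguments use the inequality $\norm{y}_{T_q}\leq\norm{y}_{\ell_q}$ on $c_{00}$ and dualize it, invoking density of $c_{00}$ to justify testing only against finitely supported vectors. The paper phrases this via an $\epsilon$-approximation (choosing a near-norming $x\in c_{00}$ and letting $\epsilon\to 0$), whereas you phrase it via the supremum characterization and a unit-ball inclusion, but this is a purely cosmetic difference.
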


\begin{proof}Since $c_{00}\subseteq\ell_p=\ell_q^*$ with $c_{00}$ dense in $\ell_q$, for each $\epsilon>0$ we can find $x\in c_{00}$ such that $|x^*(x)|\geq(\norm{x^*}_{\ell_p}-\epsilon)\norm{x}_{\ell_q}$.  Combining this with the relation $\norm{x}_{T_q}\leq\norm{x}_{\ell_q}$ (which follows from the construction of $T_q$) we get $|x^*(x)|\geq(\norm{x^*}_{\ell_p}-\epsilon)\norm{x}_{\ell_q}\geq(\norm{x^*}_{\ell_p}-\epsilon)\norm{x}_{T_q}$ and hence $\norm{x^*}_{T_q^*}\geq\norm{x^*}_{\ell_p}-\epsilon$.  Letting $\epsilon\to 0$ completes the proof.\end{proof}


\begin{example}\label{ordinal-distinct}Set $T_q=T_q[\frac{1}{2},\mathcal{S}_\xi]$, $1\leq q<\infty$ and $1\leq\xi<\omega_1$, and let $T_q^*$ denote its dual.  Let $1<p\leq\infty$ be conjugate to $q$, that is, $\frac{1}{p}+\frac{1}{q}=1$.  Then $\mathcal{WD}_{\ell_p}^{(\infty,\xi)}(T_q^*)=\mathcal{L}(T_q^*)$, whereas $\mathcal{WD}_{\ell_p}^{(\infty,\omega_1)}(T_q^*)\neq\mathcal{L}(T_q^*)$.\end{example}

\begin{proof}Consider the identity operator $I:T_q^*\to T_q^*$.  We claim that $I\in\mathcal{WS}_{\ell_p}^{(2^{1/q},\xi)}(T_q^*)$.  Indeed, let $(x_n)$ be a normalized weakly null sequence in $T_q^*$, and let $\epsilon>0$.  By the uniform version of the Bessaga-Pe\l czy\'{n}ski Selection Principle, there exists a subsquence $(x_{n_k})$ which is $(1+2^{-1/q}\epsilon)$-equivalent to a normalized block basic sequence $(u_k)$ of the unit vector basis.  Thus, by Lemma \ref{Tsirelson-block-estimate}, for every $(\alpha_k)\in\mathcal{A}_\xi$ we have $\norm{\sum\alpha_kx_{n_k}}_{T_q^*}\leq(1+2^{-1/q}\epsilon)\norm{\sum\alpha_ku_k}_{T_q^*}\leq(2^{1/q}+\epsilon)\norm{(\alpha_k)}_{\ell_p}$, and the claim is proved.

On the other hand, we also claim $I\notin\mathcal{WD}_{\ell_p}^{(\infty,\omega_1)}(T_q^*)$.  Let $(e_n)$ be the unit vector basis of $T_q^*$, which is also weakly null since $T_q^*$ is reflexive.  Recall from Lemma \ref{convexified-lower-estimate} that $\norm{(\alpha_n)}_{\ell_p}\leq\norm{(\alpha_n)}_{T_q^*}$ for all $(\alpha_n)\in c_{00}$.  Hence, for any subsequence $(n_k)$ we have $\norm{\sum\alpha_ke_{n_k}}_{T_q^*}\geq\norm{(\alpha_k)}_{\ell_p}$.  Since $T_q^*$ fails to contain a copy of $\ell_p$, then for any $C\geq 0$ and $\epsilon>0$ we must now be able to find some $(\alpha_k)\in c_{00}$ with $\norm{\sum\alpha_ke_{n_k}}_{T_q^*}\geq(C+\epsilon)\norm{(\alpha_k)}_{\ell_p}$.\end{proof}

\bibliographystyle{amsplain}

\end{document}